\documentclass{amsart}
\usepackage[english]{babel}
\usepackage[utf8]{inputenc}
\usepackage{amsmath, amssymb,epic,graphicx,mathrsfs,enumerate}
\usepackage[all]{xy}
\usepackage{color}
\usepackage{comment}
\usepackage{enumitem}
\usepackage{hyperref}
\usepackage[]{frontespizio}

\usepackage{amsmath}
\usepackage{amsthm}
\usepackage{amssymb}
\usepackage{latexsym}
\usepackage{epsfig}

\DeclareMathOperator{\core}{Core} 
\DeclareMathOperator{\aut}{Aut} 
\DeclareMathOperator{\soc}{soc}

\DeclareMathOperator{\F}{F}
\DeclareMathOperator{\oo}{O}
\newcommand{\op}{\oo_p}

\DeclareMathOperator{\C}{C}
\DeclareMathOperator{\R}{R}
\DeclareMathOperator{\Z}{Z}

\newcommand{\gen}[1]{\left\langle#1\right\rangle} 
\newcommand{\al}{\alpha}

\newcommand{\st}{such that }
\newcommand{\ifa}{if and only if } 
\newcommand{\sg}{subgroup }

\newcommand{\gr}{group }

\newcommand{\Wh}{We have }

\newcommand{\ov}[1]{\overline{#1}}
\newcommand{\w}[1]{\widetilde{#1}}

\newcommand{\n}{\mathfrak{N}}
\newcommand{\ff}{\mathfrak{F}}
\newcommand{\fr}{\phi}
\newcommand{\G}{\Gamma}
\newcommand{\gff}{G^{\ff}}

\newcommand{\nn}{\mathrel{\unlhd}}
\newcommand{\sd}{\rtimes}

\newcommand{\gf}{\G_\ff}
\newcommand{\iso}{\mathcal{I}_\ff}
\newcommand{\is}[1]{\mathcal{I}_{#1}}

\newcommand{\gs}{\G_\mathfrak{S}}

\newcommand{\uu}{\mathfrak{U}}
\newcommand{\isou}{\mathcal{I}_\uu}
\newcommand{\dd}{\mathfrak{D}}
\newcommand{\isod}{\mathcal{I}_\dd}
\newcommand{\fp}{\overline{f(p)}}
\newcommand{\sss}{\mathfrak{S}}

\newcommand{\frf}{\fr_\ff}

\newtheorem{thm}{Theorem}

\newtheorem{step}{Step} \newtheorem{lemma}[thm]{Lemma}
\newtheorem{prop}[thm]{Proposition} 
 \newtheorem{defn}[thm]{Definition}

\newtheorem{stepp}{Step}
\numberwithin{equation}{section}

\renewcommand{\footnote}{\endnote}
\newcommand{\ignore}[1]{}\makeglossary


\begin{document}
	\bibliographystyle{amsplain}
\title[The non-$\mathfrak F$-graph of a finite group]{The non-$\mathfrak F$-graph of a finite group}

\author{Andrea Lucchini}
\address{Andrea Lucchini\\ Universit\`a di Padova\\  Dipartimento di Matematica \lq\lq Tullio Levi-Civita\rq\rq\\ Via Trieste 63, 35121 Padova, Italy\\email: lucchini@math.unipd.it}
\author{Daniele Nemmi}
\address{Daniele Nemmi\\ Universit\`a di Padova\\  Dipartimento di Matematica \lq\lq Tullio Levi-Civita\rq\rq\\ Via Trieste 63, 35121 Padova, Italy\\email: daniele.nemmi@studenti.unipd.it}


\begin{abstract} Given a formation $\mathfrak F$, we consider the graph  whose vertices are the elements of $G$ and where two  vertices $g,h\in G$  are adjacent \ifa $\gen{g,h}\notin\ff$. We are interested in the two following questions. Is the set of the isolated vertices of this graph a subgroup of $G?$ Is the subgraph obtained by deleting the isolated vertices a connected graph?	\end{abstract}
\maketitle

\hbox{}

\bibliographystyle{alpha}

\section{Introduction}
Let $\mathfrak F$ be a class of finite groups and $G$ a finite group. We may consider a graph $\w{\G}_\ff(G)$  whose vertices are the elements of $G$ and where two  vertices $g,h\in G$  are connected \ifa $\gen{g,h}\notin\ff$. We denote by $\iso(G)$ the set of isolated vertices of $\w{\G}_\ff(G)$. We define  the non-$\ff$ graph $\gf(G)$ of $G$ as the subgraph of $\w{\G}_\ff(G)$ obtained by deleting the isolated vertices. 
In the particular case when $\mathfrak F$ is the class $\mathfrak A$ of the abelian groups, the graph $\Gamma_{\mathfrak A}(G)$ has been introduced by Erd\"{o}s and it is known with the name of non-commuting graph (see for example\cite{ncg}, \cite{neu}). If $\mathfrak F$ is the class $\mathfrak N$ of the finite nilpotent groups, then $\Gamma_{\mathfrak N}(G)$ is the non-nilpotent graph, studied for example in \cite{az}. When $\mathfrak F$ is the class $\mathfrak S$ of the finite soluble groups, we obtain the non-soluble graph (see \cite{ns}).

\

A group (resp. subgroup) is called an $\ff$-\emph{group} (resp. $\ff$-\emph{subgroup}) if it belongs to $\ff$. We say that $\mathfrak F$ is  \emph{hereditary} whenever if $G\in\ff$ and $H\leq G$, then $H\in\ff$. If $\mathfrak F$ is hereditary, it is interesting to consider the intersection $\fr_\ff(G)$ of all maximal $\mathfrak F$-subgroups of $G,$ that is, the subgroups which are maximal with respect to being an $\mathfrak F$-group.
It turns out that if $\mathfrak F \in \{\mathfrak A, \mathfrak N,
\mathfrak S\},$ then $\fr_\ff(G)=\iso(G)$ for any finite group $G.$ Indeed $\mathcal{I}_{\mathfrak A}(G)=Z(G),$ $\mathcal{I}_{\mathfrak N}(G)=Z_\infty(G)$ \cite[Proposition 2.1]{az}, $\mathcal{I}_{\mathfrak S}(G)=\R(G)$
\cite[Theorem 1.1]{gu}, denoting by $Z_\infty(G)$ and $\R(G)$, respectively, the hypercenter and the soluble radical of $G.$
This motivates the following definition: we say that    $\ff$ is \emph{regular} if $\ff$ is hereditary and  $\frf(G)=\iso(G)$ for every finite group $G$.

\

The first question that we address in the paper is how to characterize the hereditary saturated formations that are regular. Recall that a formation $\mathfrak F$ is a class of groups which is closed under taking homomorphic images and subdirect products. The second condition ensures the existence of the
$\mathfrak F$-residual $G^{\mathfrak F}$ of each group $G,$ that is, the smallest normal subgroup of $G$ whose factor
group is in $\mathfrak F$. A formation $\mathfrak F$ is said to be saturated if $G \in\mathfrak F$ whenever the Frattini factor
$G/\Phi(G)$ is in $\mathfrak F.$ A \gr $G$ is \emph{critical} for $\ff$ (or $\ff$-\emph{critical}) if $G\notin\ff$ and every proper \sg of $G$ lies in $\ff$, while a group $G$ is \emph{strongly critical} for $\ff$ if $G\notin\ff$ and every proper subgroup and proper quotient of $G$ lies in $\ff$.

\begin{thm}\label{regolari}Let $\mathfrak F$ be an hereditary saturated formation, with $\mathfrak A \subseteq \ff \subseteq \mathfrak S.$ Then $\ff$ is regular if and only if every finite group $G$ which	is soluble and strongly critical for $\ff$ has the property that $G/\soc(G)$ is cyclic.
\end{thm}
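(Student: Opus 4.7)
The containment $\fr_\ff(G) \subseteq \iso(G)$ holds in general for any hereditary $\mathfrak F$: if $g \in \fr_\ff(G)$ and $h \in G$, a maximal $\mathfrak F$-subgroup containing $h$ also contains $g$, so $\langle g,h\rangle \in \mathfrak F$. Regularity is therefore equivalent to the reverse inclusion $\iso(G) \subseteq \fr_\ff(G)$, and this is what both directions of the theorem address.

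For the forward direction, let $G$ be soluble and strongly critical for $\mathfrak F$. Saturation forces $\Phi(G) = 1$: otherwise $G/\Phi(G)$ would be a proper quotient lying in $\mathfrak F$, hence $G \in \mathfrak F$. Since every proper subgroup of $G$ is in $\mathfrak F$, the maximal $\mathfrak F$-subgroups of $G$ coincide with the maximal subgroups, so $\fr_\ff(G) = \Phi(G) = 1$, and by regularity $\iso(G) = 1$. Pick any $g \in N \setminus \{1\}$, where $N := \soc(G)$. Since $g$ is not isolated and every proper subgroup of $G$ lies in $\mathfrak F$, there is $h \in G$ with $\langle g,h\rangle = G$. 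Reducing modulo $N$ and using $gN = 1$ gives $G/N = \langle hN\rangle$, so $G/\soc(G)$ is cyclic.

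For the converse, assume the cyclicity hypothesis holds and that, for contradiction, $g \in \iso(H) \setminus \fr_\ff(H)$, with $M$ a maximal $\mathfrak F$-subgroup of $H$ not containing $g$. The plan is to produce a soluble $\mathfrak F$-critical subgroup $L$ of $\langle g,M\rangle$ containing $g$. Granted such $L$, the hypothesis gives $L/\soc(L)$ cyclic; applying the forward argument to $L$ itself gives $\Phi(L) = 1$, so one can write $L = N \rtimes \langle c\rangle$ with $N = \soc(L)$ a faithful irreducible $\FF_p[\langle c\rangle]$-module. Writing $g = n c^k$ in this decomposition, a direct commutator computation gives $[g, n'c] = (1-c)n + (c^k - 1)n'$ for every $n' \in N$. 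Because every nontrivial element of $\langle c\rangle$ acts without fixed vectors on the faithful irreducible module $N$ (so that $(1-c)$ and $(c^k - 1)$ are either zero or invertible), one can choose $n' \in N$ making this commutator nonzero. Then $\langle g, n'c\rangle \cap N$ is a nonzero $\langle c\rangle$-submodule of $N$, hence equals $N$ by irreducibility, so $\langle g, n'c\rangle = L \notin \mathfrak F$, contradicting $g \in \iso(L)$.

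The main obstacle is the construction of the soluble $\mathfrak F$-critical subgroup $L$ containing $g$. The natural candidate is a subgroup of $\langle g, M\rangle$ that is minimal with respect to containing $g$ and lying outside $\mathfrak F$. Proving that such $L$ is $\mathfrak F$-critical requires ruling out a proper subgroup $L' < L$ with $g \notin L'$ and $L' \notin \mathfrak F$; the case when $L'$ is cyclic is excluded by $g \in \iso(H)$, while the general case together with the solubility of $L$ requires a more delicate structural argument, likely using $\mathfrak F \subseteq \mathfrak S$ and known $2$-generation properties of almost simple groups to exclude an insoluble $L$ with nonabelian socle.
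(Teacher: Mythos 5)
The preliminary observation and the forward direction are correct and essentially coincide with the paper's argument: $\frf(G)\subseteq\iso(G)$ always holds (the paper cites Skiba's theorem, you give a direct argument using $\mathfrak A\subseteq\ff$; both work), and for a soluble strongly critical $G$ one gets $\frf(G)=\Phi(G)=1$, hence $\iso(G)=1$ by regularity, and a non-isolated $1\neq g\in\soc(G)$ yields a generating pair forcing $G/\soc(G)$ cyclic.

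The converse, however, has a genuine gap, and it sits exactly at the step you yourself flag as "the main obstacle." Your plan requires a \emph{soluble strongly critical} subgroup $L$ with $g\in\iso(L)\setminus\{1\}$, but (i) taking $L$ minimal among subgroups of $\langle g,M\rangle$ containing $g$ and lying outside $\ff$ only controls proper subgroups of $L$ that contain $g$; a proper subgroup avoiding $g$ could still lie outside $\ff$, and nothing you say rules this out; (ii) even granting criticality, the hypothesis of the theorem is about \emph{strongly} critical groups (proper quotients must also lie in $\ff$), a condition your $L$ need not satisfy and which you never address; and (iii) the solubility of $L$ is left to an unspecified "more delicate structural argument." Since the entire contradiction in your endgame (the commutator computation in $N\rtimes\langle c\rangle$) presupposes this $L$, the converse is not proved. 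For comparison, the paper takes a different and ultimately necessary route: it runs a minimal-counterexample argument on a smallest $G$ with $\frf(G)\subsetneq\iso(G)$, and proves in a sequence of steps (using Skiba's theorem, the solubility criterion of Guralnick--Kunyavski\u{\i}--Plotkin--Shalev to kill the case $\R(G)=1$, the construction of a nontrivial isolated element $n^*=[m,x^{-1}]$ of $\soc(G)$, and then $N\subseteq\iso(G)$) that this $G$ is \emph{itself} soluble, strongly critical, and has non-cyclic $G/\soc(G)$ --- precisely the structural facts your sketch assumes but does not establish. If you want to complete your approach, you would essentially have to reproduce that chain of reductions.
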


It follows from Theorem \ref{regolari} that a formation is not in general regular. For example, if $\mathfrak U$ is the formation of the finite supersoluble groups, then there exists a  strongly critical group $G$ for $\mathfrak U$ such that $\soc(G)$ is an elementary abelian group of order 25 and $G/\soc(G)$ is isomorphic to the quaternion group $Q_8.$ It is an interesting question to see if and when $\iso(G)$ is a subgroup of $G$. 

\

Consider the class $\ff$  of finite groups in which normality is transitive. The group $G=:\langle a,b,c \mid a^5=1, b^5=1, c^4=1, [a,b]=1, a^c=a^2, b^c=b^3\rangle$ 
 is critical for $\ff$ (see \cite{transitiveN}). Then $\gen{a,g}$ and $\gen{b,g}$ are proper subgroups for every $g\in G$, so they belong to the class, while $\gen{ab,y}=G$ does not belong to the class. Thus $a, b \in \iso(G)$ but $ab\notin \iso(G)$. So in general $\iso(G)$ is not a subgroup of $G.$

\

We say that a formation $\mathfrak F$ is \emph{semiregular} if $\iso(G)\leq G$ for any finite group $G.$ In Section \ref{riduzione} we will investigate the structure of a group $G$ which is  minimal with respect to the property that  $\iso(G)$ is not a subgroup. To state our result we need to recall another definition: we say that $\ff$ is \emph{2-recognizable} whenever a group $G$ belongs to $\ff$
if all $2$-generated subgroups of $G$ belong to $\ff.$

\begin{thm}\label{semi}
	Let $\mathfrak F$ be an hereditary saturated formation, with $\mathfrak A \subseteq \ff \subseteq \mathfrak S.$
	Assume that $\ff$ is 2-recognizable and not semiregular and let $G$ be a finite group of  minimal order with respect to the property that  $\iso(G)$ is not a  \sg of $G.$ Then $G$ is a primitive monolithic soluble group. Moreover, if $N=\soc(G)$ and $S$ is a complement of $N$ in $G,$ then the following hold.
	\begin{enumerate}
		\item $N=\soc(G)=\gff$. 
		\item  $N\!\gen{s}\in\ff$ for every $s\in S$; in particular $S$ is not cyclic. 
		\item if $n\in N$ and $s\in S,$ then  $ns\in\iso(G)$ \ifa $N\!\gen{s,t}\in\ff$ for all $t\in S$; in particular  $ns\in\iso(G)$ \ifa $s\in\iso(G)$.
		\item Suppose that $\ff$ is locally defined by the formation function $f$ and, for every prime $p,$ let 
		$\fp$ be the formation of the finite groups $X$ with the property that $X/\op(X)\in f(p).$
		If $K\leq S$, we have that $NK\in\ff$ \ifa $K\in\fp$, in particular $\iso(G)=N\is{\fp}(S)$, where $p$ is the unique prime dividing $|N|$.
	\end{enumerate}
\end{thm}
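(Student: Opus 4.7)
I would take $G$ to be of minimal order such that $\iso(G)$ is not a subgroup and extract its structure in stages. Since $\mathfrak A\subseteq\ff$ gives $1\in\iso(G)$, and $\langle g,h\rangle=\langle g^{-1},h\rangle$ makes $\iso(G)$ inverse-closed, the failure to be a subgroup produces $x,y\in\iso(G)$ and $z\in G$ with $\langle xy,z\rangle\notin\ff$; isolatedness descends to subgroups containing the element, so $\iso(\langle x,y,z\rangle)$ also fails to be a subgroup and minimality forces $G=\langle x,y,z\rangle$. For any $N\trianglelefteq G$ with $N\neq 1$, the images $\bar x,\bar y$ lie in $\iso(G/N)$ (formations are closed under homomorphic images), and the minimality of $G$ applied to $G/N$ forces $\overline{xy}\in\iso(G/N)$ as well. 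Were $G$ to have two distinct minimal normal subgroups $N_1,N_2$, then for each $h\in G$ the group $\langle xy,h\rangle$ would be a subdirect product of the $\ff$-groups $\langle xy,h\rangle/(\langle xy,h\rangle\cap N_i)$ $(i=1,2)$ with trivial common kernel; closure of $\ff$ under subdirect products would then force $\langle xy,h\rangle\in\ff$, contradicting the choice of $z$. Hence $G$ is monolithic with socle $N$.

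Solubility and primitivity come next. Since $\ff\subseteq\mathfrak S$, the characterisation of the soluble radical as $\{g\in G:\langle g,h\rangle\text{ is soluble for every }h\in G\}$ gives $\iso(G)\subseteq\R(G)$; a non-abelian socle would force $C_G(N)=\R(G)=1$ by monolithicity, making $\iso(G)=\{1\}$ a subgroup---a contradiction. So $N$ is an elementary abelian $p$-group. To obtain $\Phi(G)=1$ I would couple saturation of $\ff$ with 2-recognizability: if $\Phi(G)\neq 1$, minimality applied to $G/\Phi(G)$ makes $\iso(G/\Phi(G))$ a subgroup, and saturation (applied to each 2-generated subgroup of $G$ via its membership in $\ff$ modulo the Frattini intersection) lifts the isolation property, so $\iso(G)$ would equal the full preimage of $\iso(G/\Phi(G))$ and hence be a subgroup, a contradiction. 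With $\Phi(G)=1$, monolithicity, and the abelian socle in hand, one gets the standard primitive structure $G=N\rtimes S$ with $N=C_G(N)=\soc(G)$ and $S$ a complement acting faithfully.

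For the four numbered claims I would proceed as follows. Claim (4) rests on the local definition of $\ff$: since $K\leq S$ acts faithfully on the elementary abelian $p$-group $N$, the kernel of $K$'s action on any $K$-composition series of $N$ equals $O_p(K)$ (a standard unipotent-radical calculation: $O_p(K)$ lies in every $C_K(H/L)$ because a $p$-group acting on the non-trivial $p$-module $H/L$ has a $K$-invariant fixed submodule, forced to be all of $H/L$ by irreducibility; maximality follows since the quotient acts by upper-triangular unipotents), and chief factors of $NK$ above $N$ are centralised by $N$; consequently $NK\in\ff$ is equivalent to $K\in\ff$ together with $K/O_p(K)\in f(p)$, and the first condition is automatic since $K\leq S\in\ff$ by heredity once (1) is in place, leaving $K\in\fp$. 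For (1), monolithicity and $G\notin\ff$ force $G^\ff\supseteq N$, while $G/N\cong S\in\ff$ follows by applying minimality and 2-recognizability to a hypothetical 2-generated non-$\ff$-subgroup of $S$, whose preimages in $G$ would violate the established isolation pattern. Claim (2) is the case $K=\langle s\rangle$ of (4), and $S$ non-cyclic is automatic since otherwise $G=N\langle s\rangle\in\ff$. For (3), the reverse direction is heredity ($\langle ns,mt\rangle\leq N\langle s,t\rangle$ for any $h=mt\in G$); the forward direction reduces $N\langle s,t\rangle\in\ff$ via 2-recognizability to checking its 2-generated subgroups, each controlled either by the isolation of $ns$ or by the consequence $N\subseteq\iso(G)$ of (2). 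The ``in particular'' of (3) and the final identity $\iso(G)=N\is{\fp}(S)$ then recombine (3) and (4). I expect the $\Phi(G)=1$ reduction to be the most delicate step, requiring careful interplay between saturation and 2-recognizability to lift $\ff$-membership of 2-generated subgroups across $\Phi(G)$; the remaining assertions should follow smoothly by tracking the primitive structure against the local definition of $\ff$.
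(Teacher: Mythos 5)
Your overall strategy (minimal counterexample, reduction to a primitive monolithic soluble group, then local analysis of $NK$) matches the paper's, and your monolithicity argument via subdirect products is sound; but there is a genuine gap at the heart of the reduction: you never establish that $G/M\in\ff$ for every $1\neq M\nn G$, and the paper's whole argument hinges on this via one small lemma you are missing. The lemma (Lemma \ref{Icyc}) states: if $\ff$ is 2-recognizable, $\iso(X)$ is a subgroup of $X$ and $X=\iso(X)\gen{g}$ for some $g$, then $X\in\ff$. Applied to $G/M$ (where minimality makes $\iso(G/M)$ a normal subgroup containing $xM,yM$, and $G=\gen{x,y,z}$ forces $G/M=\iso(G/M)\gen{zM}$) it yields $G/M\in\ff$; from this one reads off at once $\Phi(G)=1$ by saturation, $G$ soluble, $S\cong G/N\in\ff$, and $\soc(G)=\gff$. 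Your substitutes for these facts do not hold up. The Frattini argument fails because for a subgroup $H\leq G$ one has $H\Phi(G)/\Phi(G)\cong H/(H\cap\Phi(G))$, and $H\cap\Phi(G)$ need not lie in $\Phi(H)$, so saturation does not let you pull $\ff$-membership of $2$-generated subgroups back across $\Phi(G)$; and the claim that a $2$-generated non-$\ff$-subgroup of $S$ ``would violate the established isolation pattern'' is not an argument --- nothing proved so far about $x,y,xy$ constrains $\gen{s,t}$ for $s,t\in S$.

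The same missing lemma undermines the numbered claims. You derive (2) as ``the case $K=\gen{s}$ of (4)'', but (4) is an equivalence, so this requires knowing $\gen{s}\in\fp$, i.e.\ that $\gen{s}/\op(\gen{s})\in f(p)$ --- which is exactly what is at stake (for $\ff=\uu$, $f(p)$ is the class of abelian groups of exponent dividing $p-1$, and a generic cyclic $p'$-group does not belong to it). The paper instead first proves $N\subseteq\iso(G)$ (via the commutator element $n^*=[m,x^{-1}]$ and further applications of Lemma \ref{Icyc} to subgroups of the form $N\gen{g}$), and only then obtains $N\gen{s}=\iso(N\gen{s})\gen{s}\in\ff$ from the lemma again; note that you cite $N\subseteq\iso(G)$ as a ``consequence of (2)'' without ever proving it. Likewise your forward direction of (3) --- reducing $N\gen{s,t}\in\ff$ by 2-recognizability to $2$-generated subgroups controlled ``by the isolation of $ns$ or by $N\subseteq\iso(G)$'' --- does not cover subgroups such as $\gen{s,t}$ itself, which need contain neither $ns$ nor any nontrivial element of $N$. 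Your computation for (4) is essentially the paper's and is fine once $S\in\ff$ is known, but the proof as written cannot be completed without supplying Lemma \ref{Icyc} (or an equivalent device) and the resulting statement that all proper quotients of $G$ lie in $\ff$.
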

As an application of the previous theorem we will prove.
\begin{thm}\label{appli}
The following formations are semiregular:
\begin{enumerate}
	\item the formation $\mathfrak U$ of the finite supersoluble groups.
	\item  the formation $\mathfrak D=\mathfrak N\mathfrak A$ of the finite groups with nilpotent derived subgroup.
	\item the formation $\n^t$ of the finite groups with Fitting length less or equal then $t,$ for any $t\in \mathbb N.$
 	\item the formation $\sss_p\n^t$ of the finite groups $G$ with $G/\op(G) \in \n^t.$
\end{enumerate}
\end{thm}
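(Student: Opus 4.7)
My plan is to prove each of the four claims by contradiction through a minimal counterexample, using Theorem~\ref{semi} as the main reduction. Suppose $\ff$ is one of the four listed formations and is not semiregular, and let $G$ be a finite group of minimal order with $\iso(G)$ not a subgroup. In order to apply Theorem~\ref{semi} I must first check that each $\ff$ in the list is a hereditary saturated formation with $\mathfrak A\subseteq\ff\subseteq\sss$ and is 2-recognizable. The first three properties are classical; 2-recognizability is classical for $\uu$, and for $\dd$, $\n^t$ and $\sss_p\n^t$ I would establish it inductively from the fact that nilpotency is detected on pairs of Sylow subgroups (hence on 2-generated subgroups), and that the hypotheses ``$X'$ nilpotent'', ``Fitting length $\le t$'' and ``$X/\op(X)$ of Fitting length $\le t$'' descend from 2-generated subgroups to $G$ via the chief factors.

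Theorem~\ref{semi} then forces $G$ to be primitive monolithic soluble with $N=\soc(G)$ elementary abelian of prime order $p$, a non-cyclic complement $S$, and $\iso(G)=N\,\is{\fp}(S)$. Since $\iso(G)$ is not a subgroup of $G$, the set $\is{\fp}(S)$ must fail to be a subgroup of $S$; the goal in each case is to contradict this by showing that $\is{\fp}(S)$ is in fact a subgroup of $S$.

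I would handle cases (3) and (4) by simultaneous induction on $t$. The base case $t=1$ reduces to $\n$ (which is regular, hence semiregular, with $\iso=Z_\infty$) and to $\sss_p\n$, which can be verified directly from the local definition. For the inductive step, the canonical local function of $\n^t$ at every prime is $\n^{t-1}$, so $\fp=\sss_p\n^{t-1}$; for $\sss_p\n^t$ a direct computation (using that any $q$-chief factor with $q\ne p$ survives in $G/\op(G)$, while $p$-chief factors inside $\op(G)$ impose no restriction) gives that the formation $\fp$ is again of the shape $\sss_p\n^{t-1}$. By the inductive hypothesis this smaller formation is semiregular, whence $\is{\fp}(S)\le S$ and we reach the required contradiction.

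For (1) and (2) the formation $\fp$ is smaller still, so no induction is needed. For $\dd$ one computes $f(p)=\mathfrak A$, hence $\fp=\sss_p\mathfrak A$; that is, $K\in\fp$ if and only if $K'$ is a $p$-group, and $\is{\fp}(S)$ is readily seen to be a characteristic subgroup of $S$. For $\uu$ one has $f(p)=\mathfrak A(p-1)$ and $\fp=\sss_p\mathfrak A(p-1)$; here the extra input from Theorem~\ref{semi}(2) that $N\gen{s}\in\uu$ for every $s\in S$ forces each element of $S$ to act on $N$ with eigenvalues in $\mathbb F_p$, and a short argument on commuting semisimple actions identifies $\is{\fp}(S)$ with a subgroup of $S$. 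The main obstacle I expect is to pin down the correct local function $f(p)$ and the bar-formation $\fp$ in each case, in particular for the product $\sss_p\n^t$ whose local function has a two-branch description at $p$ versus $q\ne p$ and which has to be handled carefully for the induction to close up.
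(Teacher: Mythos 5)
Your outline follows the paper's strategy essentially step for step: minimal counterexample, reduction via Theorem~\ref{semi} to the question of whether $\is{\fp}(S)$ is a subgroup of $S$, identification of $\fp$ in each case, and induction on $t$ for $\n^t$ and $\sss_p\n^t$ with the prime switching from $p$ to the prime $q$ dividing $|N|$ (a subtlety you correctly flag). So the approach is the right one and matches the paper.

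Two steps in your sketch are under-justified and hide the actual work. First, 2-recognizability of $\sss_p\n^t$ is not a routine ``descends via chief factors'' argument: the paper proves it by induction on $t$, showing that a minimal non-$\sss_p\n^t$ group is strongly critical, hence primitive monolithic by Lemma~\ref{N}, that its complement $S$ is critical for $\sss_q\n^{t-1}$ and hence $2$-generated by induction, and then invoking Proposition~\ref{lemma10} to conclude the whole group is $2$-generated. (For $\uu$, $\dd$ and $\n^t$ one can simply cite that the critical groups are $2$-generated.) Second, for $\dd$ the assertion that $\is{\sss_p\mathfrak{A}}(S)$ is ``readily seen to be a characteristic subgroup'' is not true at face value for an arbitrary $S$: the condition that $\langle s,t\rangle'$ be a $p$-group for all $t$ is not visibly closed under products. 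What makes it work is the ambient structure from Theorem~\ref{semi}: $S$ acts faithfully on the $p$-group $N$, so $\op(S)=1$, and since $S'$ is nilpotent this forces $p\nmid|S'|$; then for $K\le S$ the containment $K'\le \op(K)$ together with $K'\le S'$ gives $K'=1$, so $K\in\fp$ if and only if $K$ is abelian and $\is{\fp}(S)=\Z(S)$. An analogous coprimality observation ($p\nmid|S|$, exponent of $S$ dividing $p-1$) is what closes the supersoluble case, consistent with your ``eigenvalues in $\mathbb{F}_p$'' remark. With these two points made explicit, your plan coincides with the paper's proof.
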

We will say that a formation $\ff$ is \emph{connected} if 
the graph $\gf(G)$ is connected for any finite group $G.$ 
In Section \ref{connesso} we consider the case when
$\ff$ is a 2-recognisable hereditary saturated semiregular formation with $\mathfrak A \subseteq \ff \subseteq \mathfrak S$. In particular we investigate the structure of a group $G$ of minimal order with the property that $\gf(G)$ is not connected (when $\ff$ is not connected) and we use this information to prove the following result.

\begin{thm}\label{conreg}
		Let $\mathfrak F$ be an hereditary saturated formation, with $\mathfrak A \subseteq \ff \subseteq \mathfrak S.$ If $\ff$ is regular, then $\ff$ is connected.
\end{thm}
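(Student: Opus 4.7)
The plan is to argue by contradiction, using the structural analysis of minimal counterexamples developed in Section \ref{connesso} together with the characterisation of regularity given in Theorem \ref{regolari}.

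The first task is to verify that regularity implies the two standing assumptions of Section \ref{connesso}, namely semiregularity and 2-recognisability. Semiregularity is immediate because $\iso(G)=\rf(G)$ is an intersection of subgroups of $G$ and is therefore itself a subgroup. For 2-recognisability, I would observe that if every 2-generated subgroup of $G$ lies in $\ff$, then $\w{\G}_\ff(G)$ has no edges, so $\iso(G)=G$; by regularity $\rf(G)=G$, which (since the family of $\ff$-subgroups of $G$ is nonempty and therefore admits maximal elements) forces $G$ itself to be a maximal $\ff$-subgroup, i.e.\ $G\in\ff$.

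Now suppose for contradiction that $\ff$ is regular but not connected, and pick a finite group $G$ of minimal order with $\gf(G)$ disconnected. The structural analysis of Section \ref{connesso} (the connectedness analogue of Theorem \ref{semi}, which applies by the previous step) yields that $G$ is a primitive monolithic soluble group with $N=\soc(G)=\gff$ and $G=N\sd S$ for some complement $S$, that every proper subgroup of $G$ lies in $\ff$, and, by exploiting the disconnectedness of $\gf(G)$, that $N\gen{s}\in\ff$ for every $s\in S$. In particular $S$ cannot be cyclic, since otherwise $G=N\gen{s}\in\ff$ would contradict $\gff=N\neq 1$. Since $G$ is $\ff$-critical and its proper quotients are sections of $S\in\ff$, the group $G$ is strongly $\ff$-critical.

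Applying Theorem \ref{regolari} to the soluble strongly $\ff$-critical group $G$ then forces $G/\soc(G)\simeq S$ to be cyclic, contradicting the non-cyclicity established above. The main obstacle I foresee is extracting from the disconnectedness of $\gf(G)$ the precise analogue of Theorem \ref{semi}(2). Should that analogue not be produced in exactly the form $N\gen{s}\in\ff$ for all $s\in S$, a fallback is to argue directly that $S$ cyclic forces $\gf(G)$ connected: the faithful irreducibility of $N$ as an $\mathbb{F}_p[S]$-module gives, for any generator $s$ of $S$ and any $0\neq n\in N$, that $\gen{n,s}=N\gen{s}=G\notin\ff$, so $s$ is adjacent to every nonzero element of $N$; a short extension of this observation to cosets $n s^k$ with $s^k$ generating $S$ shows that all non-isolated vertices lie in a single component, again a contradiction.
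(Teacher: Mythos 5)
Your argument is correct, but it closes the proof by a genuinely different route from the paper. Both begin the same way: regularity gives semiregularity (since $\frf(G)$ is an intersection of subgroups) and $2$-recognisability (if every $2$-generated subgroup lies in $\ff$ then $\iso(G)=G$, so $\frf(G)=G$ by regularity; but for $G\notin\ff$ every maximal $\ff$-subgroup is proper, so $\frf(G)<G$), and then both feed a minimal disconnected counterexample $G=N\rtimes S$ into Theorem \ref{connection}. From there the paper finishes in two lines: $N\leq\iso(G)=\frf(G)$, so Proposition \ref{frf}(b) gives $\frf(G)/N=\frf(G/N)=G/N$, whence $\frf(G)=G$, impossible since $G\notin\ff$. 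You instead show that $G$ is soluble and strongly critical for $\ff$ and apply Theorem \ref{regolari} to force $S\cong G/\soc(G)$ cyclic, against point (2) of Theorem \ref{semi}. This works, but one step needs to be made explicit: the assertion that every proper subgroup of $G$ lies in $\ff$ is not part of the conclusion of Theorem \ref{connection} and does not follow merely from point (4) there. You must derive it as in Step 7 of the proof of Theorem \ref{regolari}: the maximal subgroups of $G$ are the conjugates of $S$, which lie in $\ff$ because $S\cong G/N\in\ff$, and the subgroups $K=NM$ with $M$ maximal in $S$; for the latter, $N\subseteq\iso(G)\cap K\subseteq\iso(K)=\frf(K)$ by regularity, so $K=\frf(K)M\in\ff$ by Proposition \ref{frf}(a). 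With that inserted your proof is complete. The paper's version is shorter and bypasses Theorem \ref{regolari} altogether; yours has the merit of exhibiting the minimal counterexample as exactly the kind of non-cyclic strongly critical configuration that Theorem \ref{regolari} rules out, so the two results are seen to hinge on the same obstruction.
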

A corollary of this result is \cite[Theorem 5.1]{az}, stating that the non-nilpotent graph $\Gamma_{\mathfrak N}(G)$ is connected for any finite group $G$. Moreover our approach allows to prove:

\begin{thm}
	\label{conaltri}
		If $\ff\in\{\uu,\dd,\sss_p\n^t,\n^t\}$, then $\ff$ is connected. 
\end{thm}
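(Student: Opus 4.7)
The plan is to reduce Theorem \ref{conaltri} to the structural analysis of a minimal non-connected example carried out in Section \ref{connesso}. I would first check the standing hypotheses for each $\ff\in\{\uu,\dd,\n^t,\sss_p\n^t\}$: all four are hereditary saturated formations with $\mathfrak A\subseteq\ff\subseteq\sss$; semiregularity is exactly Theorem \ref{appli}; and 2-recognizability is classical --- Doerk's theorem for $\uu$, the iterated Sylow-subgroup/Fitting-length description for $\n^t$ and $\sss_p\n^t$, and for $\dd$ the fact that $G\in\dd$ iff $G'$ is nilpotent, which is a 2-generator property on $G'$.

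Suppose by contradiction that some such $\ff$ is not connected, and let $G$ have minimal order with $\gf(G)$ disconnected. By the structure theorem of Section \ref{connesso}, which is expected to mirror Theorem \ref{semi}, one may assume that $G=N\sd S$ is primitive monolithic soluble with $N=\soc(G)=\gff$ an elementary abelian $p$-group, and with a rigid description of the edges of $\gf(G)$ in terms of the subgroups $N\gen{s,t}$ for $s,t\in S$. The goal becomes: for any two non-isolated vertices $g_1,g_2\in G$, produce a common neighbour $x$ with $\gen{g_1,x}\notin\ff$ and $\gen{g_2,x}\notin\ff$, contradicting disconnectedness.

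The bulk of the proof is a case analysis. For $\n^t$ one builds $x$ so that $\gen{g_i,x}$ has Fitting length exceeding $t$ by forcing $N$ to be a bottom Fitting factor of both joints; the case $\sss_p\n^t$ collapses to $\n^t$ since $N$ is a $p$-group and is absorbed by $\op$. For $\uu$ the socle $N$ is necessarily non-cyclic (else $G\in\uu$), and with $f(p)$ the abelian formation of exponent dividing $p-1$, Theorem \ref{semi}(4) identifies $\iso(G)=N\is{\fp}(S)$, leaving enough flexibility to pick $x$ making both joints non-supersoluble. For $\dd=\n\mathfrak A$ the same strategy applies with $f(p)=\n$. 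The main obstacle will be producing such an $x$ uniformly when $g_1$ and $g_2$ lie in different $N$-cosets; here condition (3) of Theorem \ref{semi} --- that $ns\in\iso(G)$ iff $s\in\iso(G)$ --- is essential, because it reduces the search for a bridging element to a problem inside the complement $S$, where the formation function $f(p)$ of each $\ff$ can be computed directly.
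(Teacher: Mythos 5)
Your reduction to a minimal counterexample $G=N\sd S$ is the right starting point, but the heart of your argument --- ``produce a common neighbour $x$ with $\gen{g_1,x}\notin\ff$ and $\gen{g_2,x}\notin\ff$'' --- is never actually carried out, and the sketches you give for each class (``forcing $N$ to be a bottom Fitting factor of both joints'', ``enough flexibility to pick $x$'') do not amount to a construction. The idea that makes the proof work, and which is missing from your proposal, is the last step of Theorem \ref{connection}: if $\gf(G)$ is disconnected for the minimal counterexample, then $\G_{\fp}(S)$ is \emph{already disconnected}, because part (4) of Theorem \ref{semi} translates adjacency of $ns$ and $mt$ in $\gf(G)$ into $\gen{s,t}\notin\fp$, and paths in $\G_{\fp}(S)$ lift to paths in $\gf(G)$ (using Lemma \ref{swa} when $\gen{s,t}=S$ and minimality of $G$ otherwise). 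Once you have that transfer, there is no case analysis on pairs $g_1,g_2$ at all: for $\uu$ and $\dd$ one checks that $\G_{\fp}(S)$ coincides with the non-commuting graph $\G_{\mathfrak A}(S)$, which is connected; for $\n^t$ and $\sss_p\n^t$ one gets $\G_{\fp}(S)=\G_{\sss_q\n^{t-1}}(S)$ and inducts on $t$, the base case $\sss_q\n$ being regular (Lemma \ref{snt}) and hence connected by Theorem \ref{conreg}. Your proposal gestures at ``reducing the search to $S$'' via Theorem \ref{semi}(3), but without the disconnectedness transfer and the induction anchored at the regular formation $\sss_q\n$, the contradiction is never reached.

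Two smaller points. First, your local data for $\dd=\n\mathfrak A$ is wrong: the screen used in the paper has $f(q)=\mathfrak A$ for every prime $q$ (taking $f(q)=\n$ would locally define $\n^2$, not $\n\mathfrak A$); the correct value is what makes $\G_{\fp}(S)$ equal to the non-commuting graph. Second, for $\sss_p\n^t$ the reduction is not simply ``collapses to $\n^t$ since $N$ is absorbed by $\op$'': in the minimal counterexample $N$ is a $q$-group with $q\neq p$ (if $N$ were a $p$-group then $G\in\sss_p\n^t$), and the relevant identity is $\F(NK)=N\oo_q(K)$, which sends you to $\sss_q\n^{t-1}$ on $S$, i.e.\ to a \emph{different} prime --- this is why the induction must be on $t$ over all primes simultaneously.
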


Recall that a graph is said to be embeddable in the plane, or \emph{planar}, if it can be drawn in the plane so that its edges intersect only at their ends. Abdollahi and Zarrin proved
that if $G$ is a finite non-nilpotent group, then the non-nilpotent graph $\Gamma_{\mathfrak N}(G)$ is planar if and only if $G\cong S_3$ (see \cite[Theorem 6.1]{az}). We generalize this result proving:
\begin{thm}\label{planar}
	Let $\ff$ be a 2-recognizable, hereditary, semiregular formation, with $\n\subseteq \ff,$ and let $G$ be a finite group.  Then $\gf(G)$ is planar if and only if either $G\in \ff$ or $G\cong S_3$.
\end{thm}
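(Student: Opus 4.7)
The ``if'' direction is immediate. If $G\in\ff$, then by heredity every $2$-generated subgroup of $G$ lies in $\ff$, so $\w{\G}_\ff(G)$ has no edges and $\gf(G)$ is empty, hence planar. If $G\cong S_3$ we may assume $S_3\notin\ff$; every proper subgroup of $S_3$ is cyclic, hence in $\n\subseteq\ff$, so $g\sim h$ in $\w{\G}_\ff(S_3)$ iff $\langle g,h\rangle=S_3$. A direct inspection shows $\gf(S_3)\cong K_5-e$ (the three transpositions form a triangle, each is joined to both $3$-cycles, but the two $3$-cycles are not adjacent), which is planar.

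For the converse, I would argue by induction on $|G|$. The key preliminary remark is the \emph{subgraph inheritance property}: for every subgroup $H\leq G$, $\gf(H)$ is a subgraph of $\gf(G)$. Indeed, adjacency depends only on the $2$-generated subgroup, so edges of $\gf(H)$ are edges of $\gf(G)$; moreover a non-isolated vertex of $\gf(H)$ has a neighbor inside $H$ which also witnesses its non-isolation in $\gf(G)$. Let $G$ be a counterexample of minimal order: $G\notin\ff$, $G\not\cong S_3$, and $\gf(G)$ is planar. By induction and the inheritance property, every proper subgroup $H<G$ is either in $\ff$ or isomorphic to $S_3$. Since $\ff$ is $2$-recognizable and $G\notin\ff$, there exist $a,b\in G$ with $\langle a,b\rangle\notin\ff$; by the previous step either $\langle a,b\rangle=G$ (so $G$ is $2$-generated and $\ff$-critical) or $G$ properly contains some $K\cong S_3\notin\ff$.

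In the second alternative, $K$ is maximal in $G$ (any strictly larger proper overgroup would contain $K\notin\ff$ and have order $>6$, contradicting the classification of proper subgroups). If $K\trianglelefteq G$, the conjugation map $G\to\aut(K)\cong S_3$ is already surjective on $K$ because $Z(K)=1$, giving $G=K\cdot C_G(K)$ and $K\cap C_G(K)=1$; maximality of $K$ then forces $|C_G(K)|$ to be a prime $p$ and $G\cong S_3\times\mathbb Z/p$. A direct edge count (or the explicit bipartite family formed by the three transpositions of $K\times\{0\}$ on one side and three $3$-cycles drawn from suitable $\mathbb Z/p$-layers on the other) shows that $\gf(G)$ contains $K_{3,3}$, contradicting planarity. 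If $K$ is not normal, one plays $K$ off against a distinct conjugate $K^g$ to produce the same forbidden subgraph among the six transpositions of $K\cup K^g$. In the first alternative $G$ is $\ff$-critical and $2$-generated; invoking the structural analysis of Theorem~\ref{regolari}, $G=N\rtimes S$ with $N$ an elementary abelian minimal normal subgroup. Since $G\not\cong S_3$, one of $|N|>p$ or $|S|>2$ must hold, and in each subcase six elements (using non-trivial conjugates of a fixed element of $N$ together with elements of $S$ acting non-trivially) span a $K_{3,3}$ inside $\gf(G)$.

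The principal difficulty is producing the forbidden subgraph in each of the two structural subcases: the bipartite family has to be chosen so that every pair across the partition generates a non-$\ff$ subgroup, which forces a careful analysis of when $\langle n_1s_1,n_2s_2\rangle$ can fail to lie in $\ff$. The exceptional role of $S_3$ emerges precisely as the threshold $|N|=3$, $|S|=2$, at which the supply of elements suitable for such a construction shrinks just enough to leave only the planar $K_5-e$.
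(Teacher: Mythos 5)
The ``if'' direction is fine (and your identification of $\gf(S_3)$ with $K_5$ minus an edge is correct). The converse, however, is a programme rather than a proof, and the two places where you yourself flag ``the principal difficulty'' are exactly where it breaks. First, in the case where $G$ contains a non-normal $K\cong S_3$, the six transpositions of $K\cup K^g$ do \emph{not} in general span a $K_{3,3}$: for $t\in K$ and $t'\in K^g$ the subgroup $\gen{t,t'}$ is dihedral of order $2\cdot|tt'|$, and when $tt'$ has $2$-power order this is a $2$-group, hence lies in $\n\subseteq\ff$ and contributes no edge. Second, in the case $G=\gen{a,b}$ you invoke ``the structural analysis of Theorem \ref{regolari}'' to write $G=N\rtimes S$ with $N$ elementary abelian; but Theorem \ref{planar} does not assume $\ff\subseteq\sss$, so an $\ff$-critical group here need not be soluble and that analysis is not available. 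Even granting such a structure, showing that every non-nilpotent $2$-generated group other than $S_3$ and $D_6$ has a $K_{3,3}$ (or $K_5$) in its generating graph is precisely the content of the classification of finite groups with planar generating graph, which the paper imports as a black box and which you would in effect have to reprove. A further small slip: $\gen{a,b}=G$ does not make $G$ $\ff$-critical, since $G$ could still contain a proper $S_3\notin\ff$, so your two alternatives are not set up cleanly.

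For comparison, the paper's argument is not an induction. It observes that for any edge $\{g,h\}$ of $\gf(G)$ the generating graph $\Delta(\gen{g,h})$ embeds in $\gf(G)$, so the cited classification forces $\gen{g,h}\cong S_3$ (the $D_6$ possibility being killed by an explicit $K_{3,3}$). It then uses \emph{semiregularity} — which your converse never invokes — in an essential way: if some element has order $\ge 4$ it must be isolated, so $I=\iso(G)$ is a normal subgroup of order $\ge 4$ meeting $\gen{a,b}\cong S_3$ trivially, and the cosets $aI$ and $bI$ span a complete bipartite subgraph, contradicting planarity. Hence every element has order at most $3$, and Neumann's classification of such groups finishes the proof. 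You would need to either supply correct forbidden-subgraph constructions in both of your subcases (repairing the non-normal case and essentially redoing the planar-generating-graph classification) or restructure the argument along the paper's lines.
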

\section{Some preliminary results}
This section contains some auxiliary results, that will be needed in our proofs.
\begin{defn} Let $G$ be a finite group. We denote by $V(G)$ the subset of $G$ consisting of the elements $x$ with the property that $G=\langle x, y\rangle$ for some $y.$
\end{defn}

\begin{prop}\label{lemma10}
	Let $G$ be  a primitive monolithic soluble group. Let $N=\soc(G)$ and $H$ a core-free maximal subgroup of $G$. Given $1\neq  h\in H$ and $n\in N$, $hn \in V(G)$ if and only if $h\in V(H).$
\end{prop}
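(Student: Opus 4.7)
The proof splits naturally into two directions.

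For the forward implication, suppose $hn \in V(G)$, so that $G = \langle hn, y\rangle$ for some $y \in G$. The semidirect decomposition $G = N \rtimes H$ yields a canonical isomorphism $G/N \to H$ under which the coset $hnN = hN$ corresponds to $h$. Applying the projection $\pi : G \to G/N$ to the generating pair then gives $H \cong G/N = \langle h, \pi(y)\rangle$, so $h \in V(H)$.

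For the backward implication, fix $z \in H$ with $H = \langle h, z\rangle$ and look for $y \in G$ with $\langle hn, y\rangle = G$. First I would examine the natural candidate $K = \langle hn, z\rangle$. Because $\overline{hn} = h$ and $\overline{z} = z$ generate $G/N = H$, we have $KN = G$; then $K \cap N$ is normalized by both $K$ and $N$, hence by $G = KN$, and the minimality of $N = \soc(G)$ forces $K \cap N \in \{1,N\}$. If $K \cap N = N$ then $K = G$ and we are done; otherwise $K$ is a complement of $N$ in $G$. In this second case I would try $y = zm$ for varying $m \in N$: by exactly the same dichotomy each $\langle hn, zm\rangle$ is either $G$ or a complement of $N$. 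Complements of $N$ in $G$ are parametrized by derivations $\delta : H \to N$, and $\langle hn, zm\rangle$ is a complement precisely when some derivation satisfies $\delta(h) = n$ and $\delta(z) = m$; since $H = \langle h, z\rangle$, such $\delta$ is uniquely determined by these two values.

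The decisive tool is Gasch\"utz's lemma. Since $H$ is $2$-generated and $N$ is an abelian normal subgroup of $G$, one has $d(G) = 2$, and the strong form of Gasch\"utz's lemma — which, in the presence of a minimal normal abelian subgroup, allows one to fix one coset representative in a lift of a generating pair of $G/N$ — applied to $hn$ produces $m \in N$ with $\langle hn, zm\rangle = G$, giving $hn \in V(G)$. The main technical obstacle I expect is justifying this strong form in the present situation: one must rule out the possibility that the set of ``bad'' $m \in N$ (those for which $\langle hn, zm\rangle$ is a complement) exhausts all of $N$. Translating through the derivation description, this set is either empty (so we are done immediately) or a coset of $\{\delta(z) : \delta \in Z^{1}(H,N),\,\delta(h)=0\}$ in $N$; one needs to show that either this subgroup is proper in $N$, or, in the degenerate case where it equals $N$, to produce a good pair by instead varying $z$ over $V_h(H) := \{z' \in H : H = \langle h, z'\rangle\}$ and exploiting the faithful and irreducible action of $H$ on $N$ together with $h \neq 1$.
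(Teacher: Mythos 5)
Your forward implication and the set-up of the converse (the dichotomy ``$\langle hn, zm\rangle$ is either $G$ or a complement of $N$'', obtained from $KN=G$ and the minimality of $N$) coincide with the paper's argument. But the decisive step is missing. There is no citable ``strong form of Gasch\"utz's lemma'' that lets you fix one coset representative in a lift of a generating pair: that statement is false without the hypotheses $h\neq 1$ and $\core_G(H)=1$, and proving it under those hypotheses is exactly the content of the proposition. You explicitly identify the possibility that every $m\in N$ is ``bad'' as ``the main technical obstacle'' and then only sketch two possible escape routes (show the relevant subgroup of $N$ is proper, or vary $z$ over $V_h(H)$ using irreducibility) without carrying either out. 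As written, the proof does not close.

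For comparison, the paper resolves this case by a counting argument that avoids derivations altogether: if $H_m:=\langle hn,km\rangle$ is a complement for every $m\in N$, then the $H_m$ are pairwise distinct (if $H_{m_1}=H_{m_2}$ then $m_1^{-1}m_2\in H_{m_1}\cap N=1$), all complements are conjugate to $H$, and $H=N_G(H)$ has exactly $|G:H|=|N|$ conjugates; hence $\{H_m\mid m\in N\}$ is the full set of conjugates of $H$, so $1\neq hn\in\bigcap_g H^g=\core_G(H)=1$, a contradiction. Your cohomological framework can in fact be completed along similar lines --- since $|Z^1(H,N)|$ equals the number of complements, which is $|N|$, the bad set being all of $N$ would force $\delta(h)=n$ for \emph{every} derivation $\delta$, in particular for all inner derivations, giving $h\in C_G(N)=N$ and hence $h=1$ --- but this computation, or some substitute for it, must actually appear; without it the argument has a genuine gap precisely where the hypothesis $h\neq 1$ is used.
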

\begin{proof} Clearly if $hn\in V(G),$ then $h\in V(H).$ Conversely assume that $h\in V(H)$ and let $n\in N.$ There exists $k\in H$ such that $\langle h, k\rangle=H.$ For any $m\in N,$ let $H_m:=\langle hn, km\rangle.$ Since $H_mN=\langle h, k \rangle N=G,$ either $H_m=G$ or $H_m$ is a complement of $N$ in $G.$ In particular, if we assume, by contradiction, $hn\notin V(G),$ then $H_m$ is a complement of $N$ in $G$ for any $m\in G$, and consequently $H_m=H^{g_m}$ for some $g_m\in G.$ If $H_{m_1}=H_{m_2}$ then $m_1^{-1}m_2=(km_1)^{-1}(km_2)\in H_{m_1}\cap N=1$ so $m_2=m_1.$  Since $N_G(H)=H,$ $H$ has precisely $|G:H|=|N|$ conjugates in $G$ and therefore $\{H_m\mid m\in N\}$ is the set of all the conjugates in $G$. This implies $1\neq hn \in \bigcap_{g\in G}H^g=\core_G(H)=1,$ a contradiction.
\end{proof}

\begin{lemma}\label{N}Let $\ff$ be a saturated formation with $\ff \subseteq \mathfrak S$ and let $G$ be a finite group. Suppose $G\notin\ff$  but every proper quotient is in $\ff$. Then either $\R(G)=1$ or $G$ is a primitive monolithic soluble group and $\soc(G)=\gff$. 
\end{lemma}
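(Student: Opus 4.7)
The plan is to first exploit the hypothesis that every proper quotient of $G$ lies in $\ff$ to pin down the position of the $\ff$-residual $\gff$. Since $\ff$ is a formation and $G\notin\ff$, we have $\gff\neq 1$; moreover, for every nontrivial normal subgroup $M$ of $G$, the quotient $G/M$ belongs to $\ff$, which by definition of the $\ff$-residual forces $\gff\le M$. Hence $\gff$ is contained in every nontrivial normal subgroup, so it is the unique minimal normal subgroup of $G$. In particular $G$ is monolithic and $\soc(G)=\gff$.

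Next I would branch on whether $\R(G)$ vanishes. If $\R(G)=1$, there is nothing more to prove. Otherwise $\R(G)$ is a nontrivial normal subgroup of $G$, so by the previous paragraph it contains $\soc(G)$, and therefore $N:=\soc(G)$ is soluble. As a soluble minimal normal subgroup, $N$ is elementary abelian for some prime $p$. Combining this with the fact that $G/N\in\ff\subseteq\sss$ is soluble, we conclude that $G$ itself is soluble.

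It remains to show that $G$ is primitive, i.e., that $N$ has a complement with trivial core. Here the saturation of $\ff$ does the work: if $N\le\Phi(G)$, then $G/\Phi(G)$ is a quotient of $G/N\in\ff$, so $G/\Phi(G)\in\ff$, whence $G\in\ff$ by saturation, contradicting our hypothesis. Hence $N\not\le\Phi(G)$, and by minimality $N\cap\Phi(G)=1$. Pick a maximal subgroup $M$ of $G$ with $N\not\le M$; then $G=NM$, and since $N$ is abelian $N\cap M$ is normalized by both $N$ and $M$, hence is a proper normal subgroup of $G$ contained in $N$. By minimality $N\cap M=1$, so $M$ is a complement of $N$. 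Finally, $\core_G(M)$ is a normal subgroup of $G$ contained in $M$, and since the unique minimal normal subgroup $N$ is not contained in $M$, we must have $\core_G(M)=1$. Thus $G$ is a primitive monolithic soluble group with $\soc(G)=\gff$, as required.

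No step looks genuinely hard; the only place one has to be careful is the implication $N\le\Phi(G)\Rightarrow G\in\ff$, where one uses that $\ff$ is simultaneously closed under quotients (to move from $G/N\in\ff$ to $G/\Phi(G)\in\ff$) and saturated (to lift back up to $G\in\ff$). The rest is a standard minimal-normal-subgroup/complement argument.
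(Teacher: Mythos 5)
Your proof is correct and follows essentially the same route as the paper: uniqueness of the minimal normal subgroup from the formation axioms, solubility from $\ff\subseteq\mathfrak S$, and primitivity from saturation. The only cosmetic difference is that you identify $\soc(G)=\gff$ by noting the residual lies in every nontrivial normal subgroup, whereas the paper embeds $G$ into $G/N_1\times G/N_2$; these are interchangeable, and your explicit complement/core-free argument is just a spelled-out version of the paper's appeal to $\Phi(G)=1$.
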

\begin{proof}
	If $\R(G)\neq1$, we have $G/\R(G)\in\ff$, hence $G/\R(G)$ is soluble, which implies that $G$ is soluble. If $G$ contains two different minimal normal subgroups, $N_1$ and $N_2,$ then $G=G/(N_1\cap N_2)\leq G/N_1\times G/N_2 \in \ff,$ against our assumption. So $\soc(G)$ is the unique minimal normal subgroup of $G$. Moreover $G/\soc(G)\in \ff$, hence $\soc(G)=\gff.$ Finally, since $\ff$ is a saturated formation and $G\notin \ff$, it must be $\phi(G)=1,$ so $G$ is a primitive monolithic soluble group.
\end{proof}
The following is immediate.
\begin{lemma}\label{FFF} Let $g,h\in G$ and $N\nn G$.
	\begin{enumerate}[label=(\alph*)]
		\item If $gN$ and $hN$ are adjacent vertices of $\gf(G/N)$, then $g$ and $h$ are adjacent vertices of $\gf(G)$.
		\item If $g\in \iso(G),$ then $gN\in \iso(G/N).$
		\item $\iso(G)^\sigma=\iso(G)$ for every $\sigma\in\aut(G)$.
	\end{enumerate}
\end{lemma}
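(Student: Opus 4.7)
The three parts should all follow quickly from the fact that $\ff$, being a formation, is closed under homomorphic images, combined with the identity $\langle gN, hN\rangle = \langle g,h\rangle N/N$, which exhibits $\langle gN, hN\rangle$ as a quotient of $\langle g,h\rangle$. I would also note in advance that adjacency in $\gf(G)$ coincides with adjacency in $\w{\G}_\ff(G)$, since deleting isolated vertices cannot destroy edges.

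For (a), I would argue contrapositively. If $g$ and $h$ are not adjacent in $\gf(G)$ then $\langle g,h\rangle \in \ff$, so the quotient $\langle gN, hN\rangle = \langle g,h\rangle N/N$ is likewise an $\ff$-group, and therefore $gN$ and $hN$ are not adjacent in $\w{\G}_\ff(G/N)$, and in particular not in $\gf(G/N)$ either. For (b), if $g \in \iso(G)$ and $k$ is any element of $G$, then $\langle g,k\rangle \in \ff$, and the same quotient argument places $\langle gN, kN\rangle$ in $\ff$; as $kN$ ranges over all of $G/N$, this shows $gN \in \iso(G/N)$.

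For (c), let $\sigma \in \aut(G)$ and $g \in \iso(G)$. For each $h \in G$ I would write $\langle g^\sigma, h\rangle = \langle g, h^{\sigma^{-1}}\rangle^\sigma$; since $\sigma$ restricts to an isomorphism of subgroups and $\ff$ is closed under isomorphism, the right-hand side is an $\ff$-group, hence so is the left-hand side. Thus $g^\sigma \in \iso(G)$, so $\iso(G)^\sigma \subseteq \iso(G)$, and the opposite containment follows by applying the same argument to $\sigma^{-1}$.

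Since the lemma is announced as \emph{immediate}, no step presents a genuine obstacle. The only small subtlety to keep in mind is that $\gf$ denotes the graph with isolated vertices deleted, so that in (a) one must also observe that the existence of the edge $\{g,h\}$ in $\w{\G}_\ff(G)$ automatically places both vertices in $\gf(G)$.
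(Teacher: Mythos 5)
Your proof is correct and is exactly the argument the paper has in mind when it calls the lemma ``immediate'': parts (a) and (b) rest on $\langle gN,hN\rangle=\langle g,h\rangle N/N$ being a homomorphic image of $\langle g,h\rangle$ together with quotient-closure of the formation, and part (c) on isomorphism-closure. The remark that an edge in $\w{\G}_\ff(G)$ forces both endpoints to be non-isolated, so that adjacency in $\gf$ and in $\w{\G}_\ff$ coincide, correctly handles the only minor subtlety.
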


\begin{prop}\cite[Theorem A]{sk}\label{frf}
	Let $\ff$ be a saturated formation. Let $H\leq G$ and $N\nn G$. Then
	\begin{enumerate}[label=(\alph*)]
		\item If $H\in\ff$, then $H\frf(G)\in\ff$;
		\item If $N\nn\frf(G)$, then $\frf(G)/N=\frf(G/N)$.
	\end{enumerate}
\end{prop}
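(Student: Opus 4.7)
The plan is to treat (a) and (b) separately, relying throughout on two basic facts: in a finite group every $\ff$-subgroup is contained in some maximal $\ff$-subgroup, and part (a) applied with $H=1$ gives $\frf(G)\in\ff$.

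For (a), I would pick a maximal $\ff$-subgroup $M$ of $G$ with $H\leq M$ (possible by finiteness). By the very definition of $\frf(G)$ as the intersection of \emph{all} maximal $\ff$-subgroups, $\frf(G)\leq M$, so $H\frf(G)\leq M\in\ff$. Since the paper's standing hypotheses on $\ff$ include heredity, this containment upgrades at once to $H\frf(G)\in\ff$. (In Skiba's non-hereditary generality, the saturated structure of $\ff$ has to be invoked to promote the containment to membership.)

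For (b), I first observe that $N\nn\frf(G)$ together with $N\nn G$ forces $N\leq\frf(G)$, so every maximal $\ff$-subgroup of $G$ contains $N$ and both sides of $\frf(G)/N=\frf(G/N)$ are meaningful. The plan is to set up the correspondence $M\mapsto M/N$ between maximal $\ff$-subgroups of $G$ and of $G/N$; once that bijection is established, intersecting over all such subgroups yields the equality directly. Well-definedness and the fact that $M/N\in\ff$ are immediate from the formation axioms. The two non-trivial points are that $M/N$ is actually maximal in $G/N$ whenever $M$ is maximal in $G$, and that every maximal $\ff$-subgroup $L/N$ of $G/N$ arises this way.

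The main obstacle is that both non-trivial points reduce to the same lifting statement: if $L/N\in\ff$ and $N\leq\frf(G)\leq L$, then $L\in\ff$. This is precisely where the saturated hypothesis on $\ff$ is essential. The strategy is to show that $N$, sitting inside $\frf(G)$, behaves like a Frattini-type subgroup of $L$ relative to $\ff$; concretely, one aims to establish $N\leq\Phi(L)$ (or a weaker substitute that still allows a quotient-transitivity argument), after which saturation closes the deduction $L/\Phi(L)\in\ff\Rightarrow L\in\ff$. With the bijection in hand, the computation $\frf(G)/N=\bigcap_M M/N=\bigcap_{L/N}L/N=\frf(G/N)$ finishes (b).
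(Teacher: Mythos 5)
This proposition is not proved in the paper at all: it is quoted verbatim from Skiba (\cite[Theorem A]{sk}), so there is no internal argument to compare yours against; your attempt has to stand on its own, and it does not. Part (a) is fine under the paper's standing hypothesis that $\ff$ is hereditary (note that $\frf(G)$ is normal in $G$, since the family of maximal $\ff$-subgroups is conjugation-invariant, so $H\frf(G)$ is indeed a subgroup contained in the maximal $\ff$-subgroup $M$); but as you yourself concede, for a general saturated formation the passage from $H\frf(G)\leq M\in\ff$ to $H\frf(G)\in\ff$ is exactly the hard content of Skiba's theorem, and you do not supply it.

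The genuine gap is in (b). Your whole bijection $M\mapsto M/N$ rests on the lifting statement ``if $N\leq L$ and $L/N\in\ff$ then $L\in\ff$'' (needed both for maximality of $M/N$ and for surjectivity — and note that in the surjectivity direction you may not even assume $\frf(G)\leq L$, since that containment is part of what is being proved). Your proposed mechanism for this lifting, namely establishing $N\leq\Phi(L)$ and invoking saturation, fails: already for $\ff=\mathfrak N$, take $G=S_3\times C_2$, so that $\frf(G)=Z(G)\cong C_2=:N$, and $L=A_3\times C_2\cong C_6$; then $N\leq\frf(G)\leq L$ and $L/N\in\ff$, but $\Phi(L)=1$, so $N\not\leq\Phi(L)$. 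The conclusion $L\in\ff$ still holds there, but not for the reason you give, and the parenthetical ``or a weaker substitute'' is precisely the missing idea. The actual proof goes through the $\ff$-hypercentre $Z_\ff(G)$: Skiba shows $\frf(G)\leq Z_\ff(G)$ for saturated $\ff$, and then uses the standard fact that for saturated formations $N\leq Z_\ff(L)$ and $L/N\in\ff$ imply $L\in\ff$. Without that machinery (or some equivalent), both the lifting step and hence part (b) remain unproved.
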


\section{Proof of Theorem \ref{regolari}}

Let $\mathfrak F$ be an hereditary saturated formation, with $\mathfrak A \subseteq \ff \subseteq \mathfrak S.$ 

\

First we claim that
$\frf(G)\subseteq \iso(G)$.
Since $\ff$ contains all the cyclic groups, by  Proposition \ref{frf} (a), $\gen{x}\frf(G)\in\ff$ for any $x\in G.$ The conclusion follows from the fact that $\ff$ is hereditary.

\

Suppose that $\ff$ is regular and let $G$ be a soluble strongly critical \gr for $\ff$. By Lemma \ref{N}, $G$ is a primitive monolithic soluble group. Moreover, since $G$ is critical for $\ff$,
all the maximal subgroups of $G$ are in $\ff,$ and therefore 
$\iso(G)=\frf(G)=\fr(G)=1.
$ Let $N=\soc(G)$ and $S$ a complement of $N$ in $G.$ Fix $1\neq n \in N.$ Since $n\notin \iso(G)$, $\langle n, g\rangle \notin \ff$ for some $g\in G.$ Since $G$ is $\ff$-critical, it must be $\langle n, g\rangle=G$ and therefore $G/N$ is cyclic.

\

Conversely, suppose that $\ff$ is not regular and every soluble strongly critical group $G$ for $\ff$ is \st $G/\soc(G)$ is cyclic. Let $G$ be a smallest finite group \st $\frf(G)\subset\iso(G)$. Of course $G\notin\ff$, otherwise $G=\frf(G)=\iso(G)$. Let $x\in\iso(G)\setminus\frf(G)$ and  let $H$ be an $\ff$-maximal \sg of $G$ which does not contain $x$. 
\begin{step}
$G=\gen{x,H}$.
\end{step}
\begin{proof}
Suppose, by contradiction, $\gen{x,H}<G$. Then  $x\in\iso(\gen{x,H})=\frf(\gen{x,H})$, hence, by   Proposition \ref{frf} (a),  $\gen{x,H}=\frf(\gen{x,H})H\in\ff$, against the fact that $H$ is an $\ff$-maximal subgroup of $G$.
\end{proof}
\begin{step}\label{regolari:quo}
If $1\neq M\nn G$, then $G/M\in\ff$.
\end{step}
\begin{proof}
	By Lemma~\ref{FFF} and the minimality of $G,$  $xM\in\iso(G/M)=\frf(G/M),$ hence $G/M=\gen{xM,HM/M}=\frf(G/M)HM/M\in\ff$, since $HM/M\cong H/(M\cap H)\in\ff$.
\end{proof}
\begin{step}
$G$ is a primitive monolithic soluble group and $\soc(G)=\gff$.
\end{step}
\begin{proof}
From Step \ref{regolari:quo} we are in the hypotheses of Lemma \ref{N}. If $\R(G)=1$, by \cite[Theorem 6.4]{gu}, for every $1\neq g_1\in G$  there exist $g_2\in G$ \st $\gen{g_1,g_2}$ is not soluble, and then $\gen{g_1,g_2}\notin\ff$, since $\ff$ contains only soluble groups. So, $\iso(G)=1$, hence $\frf(G)=1$, which means $\iso(G)=\frf(G)$, against the assumptions on $G$.
\end{proof}

Let $N=\soc(G)$, $S$ a complement of $N$ in $G$ and write $x=\bar n\bar s$ with $\bar n\in N, \bar s\in S.$
\begin{step}
There exists $1\neq n^*\in N\cap\iso(G)$.
\end{step}
\begin{proof} We may assume $\bar s\neq 1$ (otherwise $x=\bar n\in N\cap \iso(G)$) and $\bar s\notin V(S)$ (otherwise, by Proposition \ref{lemma10}, $\langle x, g\rangle=G\notin \ff$ for some $g\in G$ and $x=\bar n \bar s\notin \iso(G)).$ Since $C_G(N)=N,$ there exists $m\in N$ such that $x^m\neq x.$ We claim that $n^*=[m,x^{-1}]\in N\cap \iso(G).$ Indeed let $g\in G.$ Since $\bar s\not\in V(S),$ $K:=\gen{x,x^m,g}=
	\gen{x, n^*x, g}=\gen{\bar n\bar s,n^*\bar n\bar s,g}\leq N\gen{\bar s,g}<G.$
In particular, again by the minimality of $G,$ $x,x^m\in\iso(K)=\frf(K)$, hence $K=\frf(K)\gen{g}$ and, since $\gen{g}\in\ff$, $K\in\ff$. Since $\langle n^*, g\rangle \leq K,$ we conclude $\langle n^*, g\rangle \in \ff.$
\end{proof}

\begin{step}
$S$ is not cyclic.
\end{step}
\begin{proof}
	Suppose, by contradiction, $S=\gen{s}.$ Since $N$ is an irreducible $S$-module and $n^*\neq 1,$ we have $\gen{n^*,s}=G.$ However $n^*\in \iso(G),$ so this would imply $G\in \ff.$
\end{proof}

\begin{step}
$N\subseteq \iso(G)$.
\end{step}
\begin{proof}
Suppose, by contradiction, that there exist $m\in N$ and $g\in G$ such that
$\gen{g,m}\not\in \ff.$ This implies $K:=N\gen{g}\notin \ff.$ By the previous step, $K<G.$ By Lemma \ref{FFF}, $(n^*)^s\in \iso(G)$ for any $s\in S.$ So in particular
$X=\{(n^*)^s\mid s\in S\}\subseteq \iso(K).$ However, by the minimality of $G,$ $\iso(K)=\frf(K)$ is a subgroup of $G$, so $\langle X\rangle =  N\leq \frf(K)$ and consequently $K=\frf(K)\langle g \rangle\in \ff.$
	\end{proof}
\begin{step}
$G$ is a strongly critical \gr for $\ff$.
\end{step}
\begin{proof}
By Step \ref{regolari:quo}, we just need to prove that every maximal \sg of $G$ is in $\ff$. Notice that $S\cong G/N\in\ff$, and so does every conjugate of $S$. The other maximal subgroups of $G$ are of the form $K:=NM$, with $M$ maximal in $S$. In particular, by the minimality of $G$, $\iso(K)=\frf(K)$, and, by the previous step, $N\leq\frf(K)$. Hence $K=\frf(K)M\in\ff$, since $M\in\ff$.
\end{proof}
Finally, $G$ is a soluble strongly critical \gr for $\ff$, so $G/N\cong S$ is cyclic, but we excluded this possibility in Step 5. \Wh a contradiction, so $\ff$ must be regular.

\section{Proof of Theorem \ref{semi}}\label{riduzione}

To prove the theorem we need the following lemma.

\begin{lemma}\label{Icyc}
Suppose that $\ff$ is a 2-recognizable formation. If $\iso(G)$ is a subgroup of $G$ and $G=\iso(G)\!\gen{g}$ for some $g\in G$, then $G\in\ff$.
\end{lemma}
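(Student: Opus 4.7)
The plan is to exploit the $2$-recognizability of $\ff$: to conclude $G\in\ff$ it will suffice to verify that $\gen{a,b}\in\ff$ for every choice of $a,b\in G$. As a preliminary step, I would set $N:=\iso(G)$ and observe that $N$ is a subgroup by hypothesis and is invariant under conjugation by Lemma~\ref{FFF}(c), hence $N\nn G$. The assumption $G=N\gen{g}$ then allows me to write every element of $G$ in the form $hg^n$ with $h\in N$ and $n\in\mathbb{Z}_{\geq 0}$.

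The core argument is a Nielsen-style reduction on a pair of generators that mirrors the Euclidean algorithm on the associated exponents. Given $a,b\in G$, I would write $a=h_1g^{n_1}$ and $b=h_2g^{n_2}$: if one of $n_1,n_2$ is $0$, then the corresponding generator lies in $N=\iso(G)$, hence is an isolated vertex of $\w{\G}_\ff(G)$, so $\gen{a,b}\in\ff$ by the definition of $\iso(G)$. Otherwise, assuming $n_1\geq n_2>0$ and writing $n_1=qn_2+s$ with $0\leq s<n_2$, I would replace $a$ by $ab^{-q}$; using $N\nn G$ to push all factors in $N$ to the left, the computation
\[
 ab^{-q}=h_1g^{n_1}h'g^{-qn_2}=\tilde h\,g^{s},\qquad h',\tilde h\in N,
\]
shows that the new pair $(ab^{-q},b)$, which still generates $\gen{a,b}$, has exponents $(s,n_2)$ strictly smaller than $(n_1,n_2)$ in the Euclidean sense.

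I would then iterate, swapping the two generators when required, until one of the current exponents becomes $0$; this must happen in finitely many steps by termination of the Euclidean algorithm. At that point one of the two current generators lies in $N=\iso(G)$, hence is isolated, and so the pair generates a subgroup in $\ff$. Since each reduction preserves the generated subgroup, $\gen{a,b}\in\ff$, and the $2$-recognizability of $\ff$ then yields $G\in\ff$. I expect the only delicate point to be the displayed computation, where powers of $g$ must be commuted past elements of $N$; the normality of $N$ makes this routine, but it is the one place where the hypothesis that $\iso(G)$ is a subgroup really intervenes.
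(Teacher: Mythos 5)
Your argument is correct, but it takes a more laborious route than the paper's. You verify the $2$-generation condition pair by pair, running a Euclidean reduction on the $g$-exponents of an arbitrary pair $a=h_1g^{n_1}$, $b=h_2g^{n_2}$ until one generator falls into $N=\iso(G)$; this works, and the one delicate point you flag (commuting powers of $g$ past elements of $N$) is indeed covered by the normality of $\iso(G)$, which follows from Lemma~\ref{FFF}(c) together with the hypothesis that $\iso(G)$ is a subgroup. The paper instead makes a single observation that short-circuits all of this: for any $x=ig^{\al}$ with $i\in\iso(G)$ one has $\gen{g,x}=\gen{g,ig^{\al}}=\gen{g,i}\in\ff$, so $g$ itself is an isolated vertex; since $\iso(G)$ is a subgroup containing $g$, this forces $G=\iso(G)\gen{g}=\iso(G)$, and $2$-recognizability applies at once because every pair of elements of $\iso(G)$ generates an $\ff$-subgroup. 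The paper's argument is shorter, needs only the subgroup hypothesis (not normality), and yields the stronger intermediate conclusion $G=\iso(G)$; your reduction buys nothing extra here, though it is a self-contained and valid alternative.
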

\begin{proof}
Let $x$ be an arbitrary element of $G$. We have $x=ig^\al$ for some $i\in\iso(G)$ and $\al\in\mathbb{N}$. Moreover $\gen{g,ig^\al}=\gen{g,i}\in\ff$, since $i\in\iso(G)$. Hence $g\in\iso(G)$, so $G=\iso(G)$ and, because $\ff$ is 2-recognizable, $G\in\ff$.
\end{proof}
\begin{proof}[Proof of the Theorem \ref{semi}]
 Let $x,y\in\iso(G)$ \st $xy\notin\iso(G)$. There exists  $g\in G$  \st $\gen{xy, g}\notin \ff$. Notice that the minimality property of $G$ implies $G=\gen{x,y,g}.$ Let $M$ be a non-trivial normal subgroup of $G$ and set $I/M:=\iso(G/M)\nn G/M$. By Lemma \ref{FFF}, $xM,yM\in\iso(G/M)$. Since $G=\gen{x,y,g}$, we have $\gen{gM}I/M=G/M$. 
By Lemma \ref{Icyc}, $G/M\in\ff$.
So we are in the hypotheses of Lemma \ref{N}. If $\R(G)=1$, then,  as in the proof of Theorem \ref{regolari}, $\iso(G)=1$, in contradiction with the assumption that $\iso(G)$ is not a subgroup of $G.$ So $G$ is a primitive monolithic soluble group and $N=\soc(G)=\gff.$ 

We will show now that there is an element $1\neq n^*\in N\cap \iso(G)$. We write  $x$ in the form $x=\bar{n}\bar{s},$ with $\bar n\in N$ and $\bar s\in S$. If $\bar s=1,$ then $x\in N\cap \iso(G)$ and we are done (notice that $xy \not\in \iso(G)$ implies $x\neq 1).$
Suppose $\bar s\neq1$. Since $G\notin \ff,$ $x\notin V(G)$, hence $\bar s\notin V(S)$ by Proposition \ref{lemma10}. Since $C_N(x)\neq N,$ there exists $m\in N$ such that $x^m \neq x.$ We claim that $n^*:=[m,x^{-1}]\in N\cap \iso(G).$ Indeed let $g\in G.$ Since $\bar s\not\in V(S),$ $K:=\gen{x,x^m,g}=
\gen{x, n^*x, g}=\gen{\bar n\bar s,n^*\bar n\bar s,g}\leq N\gen{\bar s,g}<G.$
In particular $x,x^m\in\iso(K)$ and $K=\iso(K)\gen{g}$ and therefore  $K\in\ff$ by Lemma \ref{Icyc}.

We prove now that $N\subseteq\iso(G)$. As in Step 6 of the proof of Theorem \ref{regolari}, assume by contradiction that $\gen{g,m}\notin \ff,$ for some $m\in N$ and $g\in G.$ Setting $K:=N\gen{g},$ it follows, with the same argument, that  $N\leq \iso(K)$ and consequently $K=\iso(K)\gen{g}\in \ff$  by Lemma \ref{Icyc}, a contradiction.

Let $s$ be an arbitrary element of $S$ and let 
$H:=N\!\gen{s}.$ Since $N\subseteq \iso(G)\cap H\subseteq \iso(H),$ we deduce that $H\in \ff$ from Lemma \ref{Icyc}. This proves (2).

Let now $n\in N$ and $s\in S$. If $ns\in\iso(G)$, 
then $ns\notin V(G)$ and therefore $s\notin V(S)$ by 
 Proposition \ref{lemma10}.  Let $t$  be an arbitrary element of $S$ and set $H:=N\!\gen{s,t}<G$. Since $H<G$, by the minimality of $G,$ $\iso(H)$ is a subgroup of $G,$ and therefore $N\!\gen{s}\leq \iso(H)$, and consequently  $H=\iso(H)\!\gen{t}$ and $H\in\ff$ by Lemma \ref{Icyc}. If, on the contrary, $ns\notin\iso(G)$, then there exist $n^*\in N$ and $s^*\in S$ \st $\gen{n^*s^*,ns}\notin \ff$, hence $N\!\gen{s,s^*}\notin\ff$. This proves (3).
 
Finally, we prove (4). Let $K\leq S$. Suppose $H:=NK\in\ff$. Let $U/V$ be a $p$-chief factor of $H$ with $U\leq N$. Since $H\in\ff$, we have $\aut_H(U/V)=H/C_H(U/V)\in f(p)$; moreover, since $N$ is abelian, $N\leq\C_H(U/V)$, so $\C_H(U/V)=N\C_K(U/V)$ and hence $\aut_K(U/V)\cong\aut_H(U/V)\in f(p)$. Let $1=N_0\nn N_1\nn\dots\nn N_t=N$ with $N_i/N_{i-1}$ a chief factor of $H$ for every $i$. Since $N$ is a $p$-group, $\aut_K(N_i/N_{i-1})\in f(p)$ for every $i$, so $K/T\in f(p)$ with $T:=\bigcap_{i=1}^t C_K(N_i/N_{i-1})$. Since $C_T(N)\leq C_K(N)=1,$ $T$ is a $p$-group, hence $K^{f(p)}\leq T\leq \op(K)$ and  $K\in\fp$. Conversely,
suppose $K\in\fp$. Let $1=N_0\nn\dots\nn N_t=N=NK_0\nn\dots\nn NK_s=NK=H$ be a chief series of $H$ and denote by $\F(H)$ the Fitting subgroup of $H.$ If $1\leq i \leq t,$ then $\aut_H(N_i/N_{i-1})$ is an epimorphic image of $H/\F(H)$, since $\F(H)\leq C_H(N_i/N_{i-1})$. On the other hand, $\F(H)=N\op(K)$, hence $H/\F(H)\cong K/\op(K)\in f(p)$, and so $\aut_H(N_i/N_{i-1})\in f(p)$. Consider now $\aut_H(NK_j/NK_{j-1})$ for $1\leq j\leq a$ and let $q$ be the prime dividing $|NK_j/NK_{j-1}|$. Then we have $H/C_H(NK_j/NK_{j-1})\cong K/C_K(K_j/K_{j-1})=\aut_K(K_j/K_{j-1})\in f(q)$, since $NK_j/NK_{j-1}\cong K_j/K_{j-1}$ is a chief factor of $K$ and $K\in\ff$. So $H$ satisfies all the local conditions, and then it is in $\ff$.
\end{proof}

\section{Proof of Theorem \ref{appli}}

\begin{prop}\label{ssol}
The formation $\uu$ of finite supersoluble groups is semiregular.
\end{prop}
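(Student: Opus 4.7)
The plan is to argue by contradiction. Suppose $\uu$ is not semiregular, and let $G$ be a counterexample of minimum order. Since $\uu$ is $2$-recognizable (a classical fact about supersoluble groups), Theorem \ref{semi} applies: $G=N\rtimes S$ is primitive monolithic soluble, $N=\soc(G)=\gff$ is an elementary abelian $p$-group on which $S$ acts faithfully and irreducibly, $S$ is non-cyclic, and $N\gen{s}\in\uu$ for every $s\in S$; in particular, every $s\in S$ acts on $N$ with all eigenvalues in $\mathbb{F}_p$, and the $p'$-part of $s$ has order dividing $p-1$. Since $\uu$ is locally defined by $f(p)=\mathfrak{A}(p-1)$, the formation $\fp$ of Theorem \ref{semi}(4) is the class of $p$-nilpotent groups whose Hall $p'$-subgroup is abelian of exponent dividing $p-1$, and Theorem \ref{semi}(4) yields $\iso(G)=N\cdot\is{\fp}(S)$.

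To derive a contradiction it suffices to show $\is{\fp}(S)=Z(S)$, since $Z(S)$ is a subgroup and this would force $\iso(G)$ to be a subgroup of $G$. The inclusion $Z(S)\subseteq\is{\fp}(S)$ is routine: for a central $s$ and any $t\in S$, the group $\gen{s,t}$ is abelian, and its $p'$-part has exponent dividing $\operatorname{lcm}(|s_{p'}|,|t_{p'}|)$, which in turn divides $p-1$; hence $\gen{s,t}\in\fp$. Moreover, applying Schur's lemma to the irreducible $\mathbb{F}_p[S]$-module $N$, one has $\operatorname{End}_{\mathbb{F}_p[S]}(N)\cong\mathbb{F}_{p^d}$ for some $d$, so $Z(S)\subseteq\C_{\gl(N)}(S)\cap S=\mathbb{F}_{p^d}^*\cap S$; combining this with the fact that every $s\in S$ has $\mathbb{F}_p$-eigenvalues on $N$, central elements act as $\mathbb{F}_p$-scalars, so $Z(S)$ coincides with the set of scalar matrices of $\gl(N)$ lying in $S$.

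The main step, and the principal obstacle, is the reverse inclusion $\is{\fp}(S)\subseteq Z(S)$. Given $s\in\is{\fp}(S)$, the condition $N\gen{s,t}\in\uu$ for every $t\in S$ is equivalent to $\gen{s,t}$ stabilizing a complete $\mathbb{F}_p$-flag of $N$. Assuming, for contradiction, that $s$ is non-scalar, the set $\mathcal{F}_s$ of complete $\mathbb{F}_p$-flags of $N$ stabilized by $s$ is finite and non-empty, and for each $F\in\mathcal{F}_s$ the subgroup $S\cap\operatorname{Stab}_{\gl(N)}(F)$ is proper in $S$ (no single flag being $S$-stable, by irreducibility of $S$). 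Yet $s\in\is{\fp}(S)$ forces $S=\bigcup_{F\in\mathcal{F}_s}(S\cap\operatorname{Stab}(F))$, a covering of $S$ by finitely many proper subgroups. The plan is to produce $t\in S$ outside every such stabilizer — via a counting/Neumann-type covering argument exploiting the non-cyclicity and irreducibility of $S$ — so that $\gen{s,t}$ stabilizes no common $\mathbb{F}_p$-flag; then $\gen{s,t}\notin\fp$, contradicting $s\in\is{\fp}(S)$. Hence every $s\in\is{\fp}(S)$ acts on $N$ as a scalar and lies in $Z(S)$, completing the proof.
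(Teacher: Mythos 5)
Your reduction via Theorem \ref{semi} and your target identity $\is{\fp}(S)=Z(S)$ match the paper's strategy, and the easy inclusion $Z(S)\subseteq\is{\fp}(S)$ is fine. But the step you yourself call ``the principal obstacle'' --- the inclusion $\is{\fp}(S)\subseteq Z(S)$ --- is left as a plan, and the plan does not work as described. From $s\in\is{\fp}(S)$ you correctly obtain the covering $S=\bigcup_{F\in\mathcal F_s}\bigl(S\cap\operatorname{Stab}(F)\bigr)$ by finitely many proper subgroups, but a finite group can perfectly well be a union of finitely many proper subgroups, and a Neumann-type argument gives index bounds for coset coverings, not an impossibility. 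Worse, in the present situation the analogous covering is genuinely achieved when $s$ is trivial: since $N\gen{t}\in\uu$ for every $t\in S$ by Theorem \ref{semi}(2), every element of $S$ stabilizes some complete flag, so $S$ really is a union of flag stabilizers. Nothing in ``non-cyclicity and irreducibility of $S$'' by itself rules out such a covering for a non-central $s$; note also that every subgroup in your covering contains $s$, so the covering is consistent, for instance, with $s\in\Phi(S)$. As it stands the key step has no proof.

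What is missing is the arithmetic the paper extracts before identifying $\is{\fp}(S)$: (i) for a prime $q\neq p$ dividing $|S|$ and $s\in S$ of order $q$, Maschke's theorem applied to $N\gen{s}\in\uu$ shows that $N$ is a direct sum of one-dimensional $\gen{s}$-modules, whence $q\mid p-1$ and $q<p$; (ii) consequently, if $p$ divided $|S|$ it would be the largest prime divisor, so the supersoluble group $S$ would have a normal Sylow $p$-subgroup, contradicting $\oo_p(S)=1$ (faithful irreducible action on the $p$-group $N$); hence $\gcd(|N|,|S|)=1$ and, again by Maschke, $\exp(S)\mid p-1$. With this in hand, $\oo_p(K)=1$ for every $K\leq S$, so $K\in\fp$ iff $K\in f(p)$ iff $K$ is abelian, and $\iso(G)=NZ(S)$ follows at once, with no covering argument needed. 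Without the coprimality you cannot even reduce cleanly to flags: a nonabelian $p$-subgroup $K\leq S$ would make $NK$ a $p$-group, hence supersoluble, so ``stabilizes a complete flag'' would not force abelianness. A smaller point: your description of $\fp$ as the $p$-nilpotent groups with abelian Hall $p'$-subgroup is off; $\fp$ is the class of $p$-closed groups (normal Sylow $p$-subgroup) whose quotient by $\oo_p$ is abelian of exponent dividing $p-1$. I recommend replacing the covering plan by the Maschke/coprimality argument.
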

\begin{proof}The formation
$\uu$ is 2-recognizable since every $\uu$-critical \gr is 2-generated (see for instance \cite[Example 1]{minnonF}). Assume by contradiction that $\uu$ is not semiregular and let $G$ be a group of minimal order with respect to the property that $\isou(G)$ is not a subgroup. We can apply Theorem \ref{semi}. Let $N=\soc(G)$: we have $|N|=p^k$ for a prime $p$ and some $k$. Let $q\neq p$ be another prime divisor of the order of a complement $S$ of $N$ in $G$ and  choose $s\in S$ with $|s|=q$.  By Theorem \ref{semi}, $N\!\gen{s}\in\uu.$ Applying Maschke's Theorem, $N$ can be decomposed into a direct sum of irreducible submodules and, since $N\!\gen{s}$ is supersoluble, these submodules must have order $p$. So $s$ acts faithfully on a cyclic group of order $p$, hence  $q$ divides $p-1$ and in particular $q<p$. If $p\mid |S|$, then $p$ would be the greatest prime divisor of $|S|$. Since $S\in\uu$, the Sylow $p$-\sg of $S$ is normal in $S$. However, since $S$ acts faithfully and irreducibly on the finite $p$-group $N$,
$\oo_p(S)=1$. This implies  $\gcd(|N|,|S|)=1$ and since $N\gen{s}\in\uu$ for every $s\in S$, the exponent of $S$ divides $p-1$. The local definition $f(p)$ of $\uu$ is the formation of abelian group with exponent dividing $p-1$, therefore, since $p$ does not divide $|S|$, $NK\in\uu$ \ifa $K$ is abelian, hence $\isou(G)=N\Z(S)$ is a subgroup of $G$, so we reached a contradiction.
\end{proof}

\begin{prop}
The formation $\dd$ of the finite groups with nilpotent derived subgroup is semiregular.
\end{prop}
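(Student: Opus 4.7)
My plan is to mimic the proof of Proposition~\ref{ssol} and derive a contradiction from Theorem~\ref{semi}. The preliminary input is that $\dd$ is $2$-recognizable, i.e., every $\dd$-critical finite group is $2$-generated; this is a known fact about minimal non-$\dd$ groups, in the same spirit as the result used for $\uu$ in \cite{minnonF}.

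Assuming $\dd$ is not semiregular, I would let $G$ be a finite group of minimal order such that $\isod(G)$ is not a subgroup of $G$. By Theorem~\ref{semi}, $G$ is primitive monolithic soluble, $N=\soc(G)=\gff$ is an elementary abelian $p$-group, and a complement $S$ of $N$ acts faithfully and irreducibly on $N$. The central structural observation I would extract is that $S'$ is a $p'$-group: since $S\cong G/N\in\dd$, the subgroup $S'$ is nilpotent and hence contained in $\F(S)$; moreover $\op(S)=1$, for any nontrivial normal $p$-subgroup of $S$ would centralize the irreducible $p$-module $N$ (a $p$-group acting on a nontrivial $p$-module always has a nonzero fixed submodule, which by irreducibility must be all of $N$), contradicting faithfulness. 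Consequently $\F(S)=\oo_{p'}(S)$, and $S'\leq\F(S)$ is a $p'$-group.

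Next I would identify the local function of $\dd=\n\mathfrak{A}$ as $f(q)=\mathfrak{A}$ for every prime $q$, which is the standard local description of groups with nilpotent derived subgroup via centralizers of chief factors. Thus $\fp$ is the formation $\{X:X'\leq\op(X)\}$, and Theorem~\ref{semi}(4) gives $\isod(G)=N\cdot\is{\fp}(S)$. Now for any $s\in\is{\fp}(S)$ and any $t\in S$, the subgroup $\gen{s,t}'$ lies simultaneously inside $\op(\gen{s,t})$ (a $p$-group) and inside $S'$ (a $p'$-group), so $\gen{s,t}'=1$ and $[s,t]=1$. Hence $\is{\fp}(S)\subseteq Z(S)$; the reverse inclusion is immediate. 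Therefore $\is{\fp}(S)=Z(S)$ and $\isod(G)=NZ(S)$ is a subgroup of $G$, contradicting the choice of $G$.

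I expect the main technical hurdles to be the two formation-theoretic preliminaries, namely the $2$-recognizability of $\dd$ and the verification of its local function; both are standard but deserve explicit statement. Once they are in place, the $p$/$p'$ clash inside $S$ makes the identification $\is{\fp}(S)=Z(S)$ essentially immediate, and the rest of the argument follows the template of Proposition~\ref{ssol}.
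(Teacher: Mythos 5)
Your proof is correct and follows essentially the same route as the paper: both arguments reduce to the observation that $S'$ is a nilpotent $p'$-group (since $\op(S)=1$ by faithfulness on the irreducible $p$-module $N$), so that the derived subgroup of the relevant subgroup of $S$ is simultaneously a $p$-group and a $p'$-group, hence trivial, giving $\isod(G)=N\Z(S)$. The only difference is one of packaging: you invoke Theorem~\ref{semi}(4) and identify $\mathcal{I}_{\fp}(S)=\Z(S)$ directly, whereas the paper re-derives the equivalence ``$NK\in\dd$ iff $K$ abelian'' by decomposing $N$ as a $K$-module; the mathematical content is the same.
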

\begin{proof}
The $\dd$-critical groups are $2$-generated (see for instance \cite[Example 2]{minnonF}), so $\dd$ is 2-recognizable.
Suppose by contradiction it is not semiregular and let $G$ be a minimal example of group \st $\isod(G)$ is not a subgroup. We can apply Theorem \ref{semi}. Let $N=\soc(G)$ and $S$ a complement of $N$. We will prove that if $H\leq S$, then $NH\in\dd$ \ifa $H$ is abelian. Since $\dd$ has local screen $f$ with $f(q)$ the formation of the abelian groups for every prime $q$, if $H$ is abelian, then $NH\in\dd$. On the other hand, suppose $NH\in\dd$. Let $1=N_0\nn\dots\nn N_l=N$ be a composition series of $N$ as $H$-module. Let $V_i:=N_i/N_{i-1}$ and $C_i:=\C_H(V_i)$. For every $1\leq i\leq l,$ we have that $H/C_i\cong \aut_{NH}(V_i)$ is abelian, since $V_i$ is a chief factor of a group in $\dd$. Then we have that $H/T$ is abelian, with $T:=\bigcap_{i=1}^lC_i$. Therefore  $H'\leq T$. 
Since $C_T(N)\leq C_H(N)=1,$ $T$ is a $p$-group,  but $|S'|$ is not divisible by $p$ (otherwise, since $S^\prime$ is nilpotent,
we would have $\op(S)\neq 1),$
 so $H^\prime \leq T\cap S^\prime=1$ and $H$ is abelian. Hence $\isod(G)=N\Z(S)$, a contradiction.
\end{proof}

Let $\n^t$ the formations of finite groups with Fitting length less or equal then $t$. It is a 2-recognizable, saturated formation \cite[Example 3]{minnonF}. As an immediate application of Theorem \ref{semi}, we prove its semiregularity by proving that the formation $\fp=\sss_p\n^{t-1}$ is semiregular for every prime $p$.
We will need two preliminary lemmas.
\begin{lemma}\label{snt}
$\sss_p\n$ is regular for every prime $p$.
\end{lemma}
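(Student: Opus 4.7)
By Theorem \ref{regolari} (applied with $\ff:=\sss_p\n$, which is easily checked to be hereditary, saturated, and contained between $\mathfrak A$ and $\mathfrak S$), it suffices to show that every soluble strongly $\ff$-critical group $G$ satisfies $G/\soc(G)$ cyclic. By Lemma \ref{N} such a $G$ is primitive monolithic soluble, so $G=N\rtimes S$ with $N=\soc(G)=\gff$ an elementary abelian $q$-group (for some prime $q$) and $S\cong G/N\in\ff$. The argument splits on whether $q=p$.

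If $q=p$, the strategy is to identify $\op(G)$ as $N\op(S)=N\rtimes\op(S)$ and derive a contradiction. For $N\op(S)\subseteq\op(G)$: $N\op(S)/N\cong\op(S)$ is characteristic (hence normal) in $S\cong G/N$, so $N\op(S)$ is a normal $p$-subgroup of $G$ and lies in $\op(G)$. For the reverse inclusion, since $N\le\op(G)$, the standard splitting gives $\op(G)=N\rtimes(\op(G)\cap S)$, and $\op(G)\cap S$ is a normal $p$-subgroup of $S$, hence contained in $\op(S)$. Thus $G/\op(G)\cong S/\op(S)\in\n$, forcing $G\in\ff$, a contradiction; so this case cannot occur. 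Suppose then $q\ne p$. Now $\op(G)=1$, since any nontrivial normal $p$-subgroup must contain the unique minimal normal $N$, which is a $q$-group; consequently $\ff$-membership coincides with nilpotency, and $G$ is non-nilpotent. The same normality argument applied to $N\op(S)$ yields $\op(S)=1$, so $S\in\ff$ is nilpotent with $p\nmid|S|$. For each maximal $M<S$ the proper subgroup $NM$ is in $\ff$, i.e.\ $NM/\op(NM)$ is nilpotent; since $\op(NM)\cap N=1$ (distinct primes), the image of $N$ is a normal $q$-subgroup which, in the nilpotent quotient, is centralised by every $q'$-element, and pulling back via $C_G(N)=N$ forces the $q'$-part of $M$ to vanish. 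Hence every maximal subgroup of $S$ is a $q$-group. Combined with the nilpotent structure of $S$, a short arithmetic check on $|S|$ then yields $|S|$ prime: a mixed $q/q'$-decomposition produces a maximal subgroup of $q$-index containing a nontrivial $q'$-part, while a pure $q$-power $S$ is excluded because a $q$-group cannot act faithfully and irreducibly on the nontrivial elementary abelian $q$-module $N$. Therefore $S$ is cyclic.

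The main obstacle is the case $q\ne p$: one must convert the purely local-at-$p$ hypothesis ``$NM\in\sss_p\n$'' into strong global information about $S$. The pivot is the trivial but crucial identity $\op(NM)\cap N=1$ valid when $q\ne p$, which upgrades the nilpotency of $NM/\op(NM)$ to an honest commutativity statement between $N$ and the $q'$-elements of $M$; together with $C_G(N)=N$ this rapidly kills the $q'$-part of every maximal subgroup. Once this is achieved, the case analysis on the prime decomposition of $|S|$ is elementary.
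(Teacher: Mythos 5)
Your overall strategy is exactly the paper's: reduce via Theorem \ref{regolari} and Lemma \ref{N} to a primitive monolithic soluble strongly critical group $G=N\rtimes S$ with $N$ a $q$-group, dispose of the case $q=p$, and then show that (maximal) subgroups of $S$ are forced to be $q$-groups, whence $|S|$ is prime. The case $q=p$ and the key step for $q\neq p$ (from $NM\in\ff$ and $\op(NM)\cap N=1$ deduce that every $q'$-element of $M$ centralises $N$ and hence is trivial) are correct; the paper does essentially the same, except that it works with all proper subgroups $K<S$ and first observes $\op(NK)\le C_G(N)=N$, so $\op(NK)=1$ and $NK$ is nilpotent outright.

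There is, however, one genuinely false intermediate claim. In the case $q\ne p$ you assert that ``the same normality argument applied to $N\op(S)$ yields $\op(S)=1$, so $S\in\ff$ is nilpotent with $p\nmid|S|$.'' Mere normality of $N\op(S)$ in $G$ gives nothing here: $N\op(S)$ is no longer a $p$-group, so it does not sit inside $\op(G)$, and $\op(S)$ itself is not normal in $G$. Worse, the claim is simply wrong: take $\ff=\sss_2\n$ and $G=S_3$, which is strongly critical for $\ff$ with $N=C_3$ ($q=3$) and $S=C_2$; here $\oo_2(S)=S\neq 1$ and $p=2$ divides $|S|$. (The statement $\op(S)=1$ can only be rescued when $\op(S)<S$, and then only by invoking criticality — $N\op(S)<G$ lies in $\ff$, has trivial $\oo_p$, hence is nilpotent, forcing $\op(S)\le C_G(N)\cap S=1$ — not by normality alone.) Fortunately the error is inessential: your core step shows every maximal subgroup of $S$ is a $q$-group without using $\op(S)=1$, and from that alone $|S|$ is prime — for any prime $r\ne q$ dividing $|S|$, a Sylow $r$-subgroup cannot lie in a maximal subgroup, so $S$ is an $r$-group with trivial maximal subgroups; and $S$ cannot be a $q$-group since it acts faithfully and irreducibly on $N$. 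This also covers the case $S$ a $p$-group ($S\cong C_p$, still cyclic), which your final ``arithmetic check'' omits along with the pure-$q'$ composite case. So: right approach, correct conclusion, but the paragraph deducing $\op(S)=1$ and $p\nmid|S|$ should be deleted or replaced by the argument just indicated.
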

\begin{proof}
Let $G=N\sd S$ be a strongly-critical group for $\sss_p\n$. The socle  $N$ of $G$ is a $q$-group. If $q=p$, then, since $S\cong G/N\in\sss_p\n$ and $\op(S)=1$, if follows $S\in\n$ and $G\in\sss_p\n$, so it must be $q\neq p$. If $K<S$, then $NK\in\sss_p\n$.  Since $C_S(N)=1,$  we deduce $\op(NK)=1$, hence $NK\in\n$, which implies that $NK$ is a $q$-group (otherwise $C_K(N)\neq 1)$. \Wh then that all proper subgroups of $S$ are $q$-groups, but $S$ itself is not a $q$-group, so $S$ must be cyclic of order a prime $r\neq q$. We deduce from Theorem \ref{regolari} that $\sss_p\n$ is  regular.
\end{proof}

\begin{lemma}
$\sss_p\n^t$ is a 2-recognizable saturated formation for every $t$ and every prime $p$.
\end{lemma}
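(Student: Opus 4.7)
My plan is to handle the two properties separately.

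For saturation, I would observe that $\sss_p\n^t$ coincides with the formation product of $\sss_p$ and $\n^t$: a group $G$ admits a normal $p$-subgroup $N$ with $G/N\in\n^t$ if and only if $G/\op(G)\in\n^t$, since any such $N$ lies in $\op(G)$ and $\n^t$ is quotient-closed. Both $\sss_p$ and $\n^t$ are saturated, and the formation product of two saturated formations is saturated (Doerk--Hawkes, IV.1.8), so $\sss_p\n^t$ is saturated.

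For 2-recognizability, it suffices to show that every $\sss_p\n^t$-critical group $G$ is 2-generated: given any $X\notin\sss_p\n^t$, a subgroup of $X$ of minimal order outside $\sss_p\n^t$ is critical, and would be 2-generated under the claim. So let $G$ be $\sss_p\n^t$-critical. It is soluble (as $\sss_p\n^t\subseteq\sss$), and using saturation I may pass to $G/\Phi(G)$ and assume $\Phi(G)=1$; a standard subdirect-product argument then forces $G$ to be monolithic, so $G$ is a primitive monolithic soluble group $G=N\rtimes S$ with $N=\soc(G)$. I would next rule out the case where $N$ is a $p$-group: then $\op(G)$ is the full preimage of $\op(S)$ in $G$, so $G/\op(G)\cong S/\op(S)$, and $G\in\sss_p\n^t\Leftrightarrow S\in\sss_p\n^t$; but $S$ is a proper subgroup of $G$ and hence in $\sss_p\n^t$ by criticality, contradicting $G\notin\sss_p\n^t$. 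Therefore $N$ is a $q$-group with $q\neq p$, and the self-centralizing property $C_G(N)=N$ combined with coprimality forces $\op(H)\leq C_H(N)\leq N$, hence $\op(H)=1$, for every subgroup $H$ of $G$ containing $N$; in particular $\op(G)=1$, so $G\notin\n^t$, and every proper overgroup $NK$ of $N$ in $G$ lies in $\n^t$.

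I would finish by establishing that $G$ is in fact $\n^t$-critical: every proper subgroup of $G$ lies in $\n^t$, not merely those containing $N$. Once this is done, the 2-recognizability of $\n^t$ \cite[Example 3]{minnonF} yields a 2-generating pair for $G$, completing the proof.

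The main obstacle I expect is exactly this last step: controlling $\op(H)$ for proper subgroups $H\leq G$ that do \emph{not} contain $N$, since $\op(H)$ is only required to be a normal $p$-subgroup of $H$ and is not controlled by $\op(G)=1$. My plan to address it is to exploit the faithful irreducible $S$-module structure of $N$ together with the criticality of $G$: any such $H$ is itself primitive monolithic (by an argument analogous to the one used for $G$), and the same socle/centralizer analysis applied to $H$ forces $\op(H)=1$, whence $H\in\sss_p\n^t$ upgrades to $H\in\n^t$.
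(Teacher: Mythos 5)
Your treatment of saturation is fine and agrees with the paper (which cites the same Doerk--Hawkes results), and your reduction of a critical group to a primitive monolithic soluble group $G=N\rtimes S$ with $N$ a $q$-group, $q\neq p$, and $\op(H)=1$ for every $H\geq N$ also matches the paper's argument. The divergence, and the problem, is in your final step.

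You need $G$ to be $\n^t$-critical in order to invoke the $2$-generation of minimal non-$\n^t$-groups, and for that you must show that \emph{every} proper subgroup $H$ of $G$ lies in $\n^t$, not only those containing $N$. Criticality only gives $H\in\sss_p\n^t$, i.e.\ $H/\op(H)\in\n^t$, and when $\op(H)\neq 1$ this allows $H$ to have Fitting length $t+1$, hence $H\notin\n^t$. Your proposed repair --- that any proper $H$ not containing $N$ is itself primitive monolithic, so the socle/centralizer analysis forces $\op(H)=1$ --- is not available: the argument that $G$ is primitive monolithic uses that $G$ is a \emph{minimal} non-$\ff$-group together with closure of $\ff$ under subdirect products and saturation, and none of this applies to an arbitrary proper subgroup $H$ (which may be nilpotent, may have several minimal normal subgroups, may be a $p$-subgroup of $S$, etc.). The centralizer argument $\op(H)\leq C_H(N)=H\cap N$ also genuinely requires $N\leq H$. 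So the step ``$H\in\sss_p\n^t$ upgrades to $H\in\n^t$'' is an unproved claim, and it is exactly the point where the whole strategy of reducing to $\n^t$-criticality breaks down.

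The paper sidesteps this by arguing on $S$ rather than on $G$: it only ever needs subgroups of the form $NK$ with $K\leq S$, which contain $N$ and therefore satisfy $\op(NK)=1$ and $\F(NK)=N\oo_q(K)$; this shows each proper $K<S$ lies in $\sss_q\n^{t-1}$ while $S$ does not, so $S$ is critical for $\sss_q\n^{t-1}$. Induction on $t$ (base case $\sss_p\n$, Lemma~\ref{snt}) then gives that $S$ is $2$-generated, and Proposition~\ref{lemma10} lifts a generating pair of $S$ to one of $G$. If you want to salvage your approach you would have to either prove the missing containment for subgroups not containing $N$ (which does not seem to follow from criticality alone) or switch to the paper's induction on $t$ through the complement $S$. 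A secondary, fixable point: to get $G$ monolithic via the subdirect product argument you need proper \emph{quotients} of $G$ in the class, so you should take $G$ to be a minimal counterexample to $2$-recognizability (which is strongly critical) rather than an arbitrary critical group.
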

\begin{proof}The formation $\sss_p\n^t$ is saturated (see \cite[IV, 3.13 and 4.8]{dh}).
 We prove by induction on $t$ that  $\sss_p\n^t$ is a 2-recognizable. We have seen in Lemma \ref{snt} that $\sss_p\n$ is 2-recognizable for every prime $p$. Let $t\neq 1$ and let $G$ be a group of minimal order with respect to the property that every 2-generated subgroup of $G$ is in
	$\sss_p\n^t$ but $G$ is not. Clearly $G$ is strongly critical for $\sss_p\n^t$, so, by Lemma \ref{N}, $G=N\sd S$, where $N=\soc(G)$ is an elementary abelian group of prime power order and $S\in\sss_p\n^t$. If $N$ is a $p$-group, then $G\in\sss_p\n^t$, hence $N$ is a $q$-group with $q\neq p$.  If $K<S$, then $NK\in \sss_p\n^t.$ Since $C_K(N)=1,$ it must be $\oo_p(NK)=1$ so $NK\in\n^t$. Moreover the Fitting subgroup $\F(NK)$ of $NK$ coincides with $\oo_q(NK)=N\oo_q(K)$ and therefore $K\in\sss_q\n^{t-1}$, so $S$ is critical for $\sss_q\n^{t-1}$. Since, by induction,
	 $\sss_q\n^{t-1}$ is 2-recognizable, the group $S$ is 2-generated. By Proposition \ref{lemma10}, $G$ itself is $2$-generated and hence $G\in\sss_p\n^t$, a contradiction.
\end{proof}

\begin{prop}
$\sss_p\n^t$ is semiregular for every $t$ and every prime $p$.
\end{prop}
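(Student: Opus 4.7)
The plan is to induct on $t$, with the base case $t=1$ being immediate from Lemma \ref{snt}: since $\sss_p\n$ is regular, it is semiregular. For the inductive step I fix $t\geq 2$, assume that $\sss_p\n^s$ is semiregular for every prime $p$ and every $s$ with $1\leq s<t$, fix a prime $q$, and suppose by contradiction that $\sss_q\n^t$ is not semiregular. The preceding lemma gives that $\sss_q\n^t$ is a $2$-recognizable saturated formation, so Theorem \ref{semi} applies to a minimal counterexample $G$: I obtain $G=N\sd S$ primitive monolithic soluble with $N=\soc(G)$ an elementary abelian group of exponent a prime $p$, and by part (3) the isolated set decomposes as $\iso(G)=N\cdot T$, where $T=\{s\in S\mid N\gen{s,t}\in\sss_q\n^t\text{ for every }t\in S\}$. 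It thus suffices to prove that $T$ is a subgroup of $S$.

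The core of the argument is to compute $\oo_r(NK)$ and $\F(NK)$ for an arbitrary $K\leq S$ and an arbitrary prime $r$. A routine commutator check shows that $N\oo_r(K)$ is always a normal $r$-subgroup of $NK$; for $r=p$, since moreover $\oo_r(NK)/N$ is a normal $r$-subgroup of $NK/N\cong K$, this yields the equality $\oo_p(NK)=N\oo_p(K)$. When $r\neq p$, order considerations give $\oo_r(NK)\cap N=1$, so $[\oo_r(NK),N]\leq\oo_r(NK)\cap N=1$ and $\oo_r(NK)$ centralizes $N$; since $C_G(N)=N$ in any primitive monolithic group, this forces $\oo_r(NK)=1$. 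Consequently $\F(NK)=N\oo_p(K)$.

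I then split into two cases. In the case $p=q$, the computation above gives $NK/\oo_q(NK)\cong K/\oo_q(K)$, so $NK\in\sss_q\n^t$ if and only if $K\in\sss_q\n^t$; hence $T=\is{\sss_q\n^t}(S)$, which is a subgroup of $S$ because $|S|<|G|$ and $G$ was chosen to be a minimal counterexample. In the case $p\neq q$, the vanishing $\oo_q(NK)=1$ gives $NK\in\sss_q\n^t$ if and only if $NK\in\n^t$, and the isomorphism $NK/\F(NK)\cong K/\oo_p(K)$ turns this into the condition $K\in\sss_p\n^{t-1}$. Hence $T=\is{\sss_p\n^{t-1}}(S)$, and this is a subgroup of $S$ by the inductive hypothesis, applied to the prime $p$ and the exponent $t-1$.

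In either case $T\leq S$, so $\iso(G)=NT$ is a subgroup of $G$, contradicting the choice of $G$ and completing the induction. The main obstacle is the second case: one must identify $\sss_p\n^{t-1}$, rather than the naive guess $\n^{t-1}$, as the formation governing the isolated vertices in $S$, and this identification hinges on the Fitting-subgroup calculation $\F(NK)=N\oo_p(K)$, where the use of $C_G(N)=N$ to kill the $p'$-part of $\F(NK)$ is essential for the induction to close on the right formation.
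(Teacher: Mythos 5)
Your proof is correct and follows essentially the same route as the paper: induction on $t$, a minimal counterexample handled via Theorem \ref{semi}, the Fitting computation $\F(NK)=N\oo_p(K)$ using $C_G(N)=N$, and the identification of the relevant formation on $S$ as $\sss_p\n^{t-1}$ so that the inductive hypothesis closes the argument. The only (immaterial) divergence is in the case where the socle prime coincides with the formation's prime: the paper observes this case cannot occur at all (a minimal counterexample would then lie in $\sss_p\n^t$), whereas you reach the contradiction by showing the isolated set is a subgroup anyway.
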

\begin{proof}
We  prove by induction on $t$ that $\sss_p\n^t$ is semiregular for every $t$. By Lemma \ref{snt} we may assume $t>1.$ Suppose by contradiction that $\sss_p\n^t$ is not semiregular and let $G$ be a minimal example of group \st $\is{\sss_p\n^t}(G)$ is not a subgroup. We can apply Theorem \ref{semi}. Let $N=\soc(G)$ and $S$ a complement of $N$. Since $S\in\sss_p\n^t$, if $N$ were a $p$-group, then $G$ would be in $\sss_p\n^t$, hence $N$ is a $q$-group with $q\neq p$. Let now $s,t\in S$ and $K:=\gen{s,t}$: since $\F(NK)=N\oo_q(K)$, we have $NK\in\sss_p\n^t$ if and only if $NK\in\n^t$, if and only if $K\in\sss_q\n^{t-1}$. Hence by induction we conclude that $\is{\sss_p\n^t}(G)=N\is{\sss_q\n^{t-1}}(S)$ is a subgroup, a contradiction.
\end{proof}

\begin{prop}
$\n^t$ is semiregular for every $t$.
\end{prop}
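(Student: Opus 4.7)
The plan is to mimic the proof that $\sss_p\n^t$ is semiregular, replacing Lemma \ref{snt} at the crucial step by the previous proposition. First I would dispose of the base case $t=1$: the introduction already records $\is{\n}(G) = Z_\infty(G)$, so $\n = \n^1$ is regular and in particular semiregular.

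For $t \geq 2$ I would argue by contradiction. Since $\n^t$ is a 2-recognizable saturated formation (as recalled just before Lemma \ref{snt}) with $\mathfrak{A} \subseteq \n^t \subseteq \mathfrak{S}$, Theorem \ref{semi} applies to a minimal counterexample $G$. Thus $G$ is a primitive monolithic soluble group with $N = \soc(G) = G^{\n^t}$ a $p$-group, with complement $S$, and item (4) gives $\is{\n^t}(G) = N\,\is{\fp}(S)$, where $\fp$ is built from a local defining function $f$ of $\n^t$.

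The key step is to identify $\fp$. Since $\n^t = \n \cdot \n^{t-1}$, the canonical local formation function is constant equal to $\n^{t-1}$ (see \cite[IV, 3.13]{dh}); concretely, $G \in \n^t$ iff $G/\F(G) \in \n^{t-1}$, and for every chief factor $H/K$ of $G$ the Fitting subgroup $\F(G)$ lies in $C_G(H/K)$, so $\aut_G(H/K)$ is a quotient of $G/\F(G)$ and hence lies in $\n^{t-1}$. Consequently $\fp = \{X : X/\op(X) \in \n^{t-1}\} = \sss_p\n^{t-1}$. The previous proposition asserts that $\sss_p\n^{t-1}$ is semiregular, so $\is{\sss_p\n^{t-1}}(S)$ is a subgroup of $S$, and thus $\is{\n^t}(G) = N \cdot \is{\sss_p\n^{t-1}}(S)$ is a subgroup of $G$, contradicting the choice of $G$.

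The main obstacle here is conceptual rather than computational: it is the bookkeeping of the local screen $f(p) = \n^{t-1}$ of $\n^t$, which makes the auxiliary formation $\fp$ appearing in Theorem \ref{semi}(4) coincide with one already proved semiregular. Once that identification is made the argument collapses to a one-line application of the previous proposition.
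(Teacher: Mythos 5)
Your proposal is correct and follows essentially the same route as the paper, which likewise observes that for $\ff=\n^t$ the auxiliary formation of Theorem \ref{semi}(4) is $\fp=\sss_p\n^{t-1}$ and then invokes the semiregularity of $\sss_p\n^{t-1}$ established in the preceding proposition. Your write-up merely makes explicit the identification of the local screen and the base case $t=1$, which the paper leaves implicit.
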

\begin{proof}
Since $\fp=\sss_p\n^{t-1}$, the statement follows from Theorem \ref{semi} and Proposition \ref{snt}.
\end{proof}

\section{Connectedness of $\gf$}\label{connesso}

In this section we study for which formations the graph $\gf(G)$ is connected for every finite group $G$. In the spirit of the previous sections we will build, under the additional assumption that $\ff$ is semiregular, a smallest \gr $G$  \st $\gf(G)$ is not connected. First we need a preliminary lemma.
\begin{lemma}\label{swa}
Let $G$ be a 2-generated finite soluble group, with $G\notin \ff.$ If $x,y\in V(G),$ then $x$ and  $y$ belong to the same connected component of $\gf(G).$
\begin{proof}
Consider the graph $\Delta(G)$ whose vertices are the elements of
$V(G)$ and in which $g_1, g_2$ are adjacent if and only if $\langle g_1, g_2 \rangle=G.$ If $G$ is soluble then $\Delta(G)$ is a connected graph (see \cite[Theorem 1]{cl}). The conclusion follows from the fact that $\Delta(G)$ is a subgraph of $\gf(G).$
\end{proof}
\end{lemma}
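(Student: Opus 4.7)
The plan is to deduce this essentially for free from the existing literature on the generating graph of a finite soluble group. Concretely, I would introduce the auxiliary graph $\Delta(G)$ whose vertex set is $V(G)$ and in which two vertices $g_1,g_2$ are declared adjacent whenever $\gen{g_1,g_2}=G$. The classical result of Crestani--Lucchini (the citation \cite{cl} already present in the statement) guarantees that, since $G$ is a finite soluble 2-generated group, $\Delta(G)$ is a connected graph.

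The second step is to compare $\Delta(G)$ with $\gf(G)$. First, every $v\in V(G)$ is actually a vertex of $\gf(G)$: by definition of $V(G)$ there exists $h\in G$ with $\gen{v,h}=G$, and since by hypothesis $G\notin\ff$, the pair $\{v,h\}$ is an edge of $\w{\G}_\ff(G)$, so $v\notin\iso(G)$. Second, every edge of $\Delta(G)$ is an edge of $\gf(G)$: if $\gen{g_1,g_2}=G\notin\ff$, then $g_1$ and $g_2$ are adjacent in $\w{\G}_\ff(G)$, hence also in $\gf(G)$ since, by the first observation, neither is isolated. Thus $\Delta(G)$ is a (spanning on $V(G)$) subgraph of $\gf(G)$.

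Finally, I would conclude: given $x,y\in V(G)$, by connectedness of $\Delta(G)$ there is a path in $\Delta(G)$ joining them, and this same path is a path in $\gf(G)$. So $x$ and $y$ lie in the same connected component of $\gf(G)$.

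The proof is essentially routine once the generating graph result is invoked; the only point that requires a moment of thought (rather than an obstacle) is verifying that elements of $V(G)$ are non-isolated in $\w{\G}_\ff(G)$, and this uses crucially the assumption $G\notin\ff$, without which $V(G)\subseteq\iso(G)$ could occur and $\Delta(G)$ would not sit inside $\gf(G)$ at all.
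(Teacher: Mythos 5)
Your proof is correct and follows exactly the same route as the paper: invoke the Crestani--Lucchini connectedness theorem for the generating graph $\Delta(G)$ of a $2$-generated finite soluble group, and then observe that $\Delta(G)$ is a subgraph of $\gf(G)$ because $G\notin\ff$. Your explicit check that elements of $V(G)$ are non-isolated is a detail the paper leaves implicit, but it is the right (and only) point needing verification.
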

\begin{thm}\label{connection}
		Let $\mathfrak F$ be a 2-recognizable, hereditary, saturated formation, with $\mathfrak A \subseteq \ff \subseteq \mathfrak S.$
Assume that $\ff$ is semiregular and suppose that there exists a finite group $G$ such that $\gf(G)$ is not connected. If $G$ has minimal order with respect to this property, then $G$ is a primitive monolithic soluble group, $N=\soc(G)=\gff$ and $N\subseteq\iso(G)$. Moreover, the same statements of point (2-4) of Theorem \ref{semi} hold. With the same notation, we have also that $\G_{\fp}(S)$ is not connected.
\end{thm}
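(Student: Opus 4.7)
The plan is to parallel the proof of Theorem \ref{semi} closely, replacing ``$x, y \in \iso(G)$ with $xy \notin \iso(G)$'' by ``$x, y$ non-isolated vertices of $\gf(G)$ lying in distinct connected components''. After fixing such $x, y$ together with elements $g, h \in G$ with $\gen{x, g}, \gen{y, h} \notin \ff$ (these witness non-isolation), the first reduction is $G = \gen{x, y, g, h}$: a proper subgroup $H$ of $G$ containing $\{x, y, g, h\}$ would satisfy $H \notin \ff$ (since $\gen{x, g} \le H$), so $\gf(H)$ would be connected by the minimality of $G$; then $x$ and $y$, being non-isolated in $\gf(H)$, would be joined by a path in $\gf(H)$ that is also a path in $\gf(G)$ (adjacency depends only on the pair), contradicting the choice of $x, y$.

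The core step is to show that $G/M \in \ff$ for every $1 \neq M \nn G$. Assume the contrary. By minimality $\gf(G/M)$ is connected, and by semiregularity $I_M := \pi^{-1}(\iso(G/M))$ is a normal subgroup of $G$ containing $M$. Lemma \ref{FFF}(a) lifts paths from $\gf(G/M)$ to $\gf(G)$, so connectedness of $\gf(G/M)$ forces any two vertices of $\gf(G)$ whose images are non-isolated in $\gf(G/M)$ to lie in the same component of $\gf(G)$. In particular, at most one of the components containing $x$ and $y$ can have an element mapping outside $\iso(G/M)$; hence $y, h \in I_M$. A case analysis then closes the step: if $x, g \in I_M$ then $G \le I_M$ and $G/M = \iso(G/M) \in \ff$ by 2-recognizability; if exactly one of $x, g$ lies outside $I_M$, then $G = I_M \gen{z}$ for that element and Lemma \ref{Icyc} gives $G/M \in \ff$; the case in which $xM, gM$ are both non-isolated in $\gf(G/M)$ is handled by re-choosing the witness $g$ of the non-isolation of $x$ among elements whose images lie in $\iso(G/M)$, the existence of such a witness being forced by the component structure of $\gf(G)$ relative to $I_M$.

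Having established $G/M \in \ff$ for every non-trivial $M \nn G$, Lemma \ref{N} leaves only two possibilities. The case $R(G) = 1$ is excluded as in Theorem \ref{regolari}: by \cite[Theorem 6.4]{gu} every non-identity element of $G$ would be non-isolated in $\gf(G)$, so $\iso(G) = 1$; then $\gf(G)$ contains the non-soluble graph on $G \setminus \{1\}$ as a subgraph (because $\ff \subseteq \sss$), and the latter is connected whenever $R(G) = 1$ (see \cite{ns}), contradicting our hypothesis. Hence $G$ is primitive monolithic soluble, and $\gff = \soc(G) = N$. The remaining claims $N \subseteq \iso(G)$ and items (2)--(4) are obtained verbatim from the corresponding steps in the proof of Theorem \ref{semi}, relying on Lemma \ref{Icyc} and Proposition \ref{lemma10}.

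The disconnection of $\G_{\fp}(S)$ then follows from (3) and (4) as a translation dictionary: by (3), $ns$ is non-isolated in $\gf(G)$ iff $s$ is non-isolated in $\G_{\fp}(S)$; and by (4) together with hereditariness, whenever $\gen{n_1 s_1, n_2 s_2} \notin \ff$ one has $\gen{s_1, s_2} \notin \fp$, since $\gen{n_1 s_1, n_2 s_2} \le N\gen{s_1, s_2}$ and $N\gen{s_1, s_2} \in \ff$ iff $\gen{s_1, s_2} \in \fp$. Thus any path in $\gf(G)$ from $ns$ to $mt$ projects to a walk $s = s_0, \ldots, s_k = t$ in $S$ whose consecutive distinct vertices are adjacent in $\G_{\fp}(S)$; after collapsing repeated vertices this yields a path in $\G_{\fp}(S)$, so the disconnectedness of $\gf(G)$ is inherited by $\G_{\fp}(S)$. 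The main obstacle I expect is the delicate sub-case of the core step where $xM$ and $gM$ are both non-isolated in $\gf(G/M)$: carefully producing a neighbour of $x$ whose image in $G/M$ is isolated is the technical heart of the argument.
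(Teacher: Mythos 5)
Your overall architecture (reduce to proper quotients in $\ff$, apply Lemma \ref{N}, transfer the structural statements from Theorem \ref{semi}, then relate $\gf(G)$ to $\G_{\fp}(S)$) matches the paper's, but three steps have genuine gaps. First, in the core step the sub-case you flag as the ``technical heart'' is exactly where your argument breaks: when both $x$ and its witness $g$ lie outside $I_M$, you cannot in general produce a new witness $g'\in I_M$ with $\gen{x,g'}\notin\ff$ (nothing in the component structure forces one to exist), and with only $G=I_M\gen{x,g}$ Lemma \ref{Icyc} does not apply. The paper avoids this entirely: it distinguishes whether $I_M\in\ff$ (in which case $I_M$ carries no edges, so every component meets $G\setminus I_M$ and the graph is connected) or $I_M\notin\ff$, and in the latter case takes a maximal subgroup $H\supseteq I_M$, uses connectedness of $\gf(H)$ to show $H=I_M\cup\iso(H)$, concludes $H=I_M$ since a group is not the union of two proper subgroups, so that $I_M$ is maximal and $G=I_M\gen{g}$ for \emph{any} $g\notin I_M$, whence Lemma \ref{Icyc} applies. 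Second, the claim that $N\subseteq\iso(G)$ is ``obtained verbatim'' from Theorem \ref{semi} is false: there the argument starts from a nontrivial element of $\iso(G)$, whereas here $\iso(G)$ is a normal subgroup and the only case needing work is $\iso(G)=1$, where no such element exists. The paper's Step~2 is a separate, substantial argument (using $V(G)$, Proposition \ref{lemma10}, Lemma \ref{swa}, non-cyclicity of $S$, and a conjugation trick) showing that $\iso(G)=1$ forces $\gf(G)$ to be connected; you have omitted it entirely.

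Third, your final step is logically reversed. Projecting edges of $\gf(G)$ down to $\G_{\fp}(S)$ only shows that $ns\approx mt$ implies $s\approx t$, i.e.\ that the preimage of a component of $\G_{\fp}(S)$ is a \emph{union} of components of $\gf(G)$; this is compatible with $\G_{\fp}(S)$ being connected while $\gf(G)$ is not. What is needed is the lifting direction: if $s\sim t$ in $\G_{\fp}(S)$, then $ns\approx mt$ in $\gf(G)$ for all $n,m\in N$. This is not a formal consequence of (3) and (4): the paper proves it by splitting into the cases $\gen{s,t}=S$ (where $ns,mt\in V(G)$ by Proposition \ref{lemma10} and one invokes Lemma \ref{swa} on the connectedness of the generating graph of a soluble group) and $\gen{s,t}<S$ (where one uses the minimality of $G$ to get connectedness of $\gf(N\gen{s,t})$). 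Your first reduction ($G=\gen{x,y,g,h}$) and your treatment of the case $\R(G)=1$ are fine, but as written the proof does not close.
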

\noindent Given a finite group $X$, we will write $x_1\sim x_2$ to denote that $x_1$ and $x_2$ are two adjacent vertices of $\gf(X)$
	and $x_1\approx x_2$ if $x_1$ and $x_2$ belong to the same connected component of $\gf(X).$
We divide the proof in the following steps.

\begin{stepp}
$G$ is a primitive monolithic soluble group and
 $N=\soc(G)=\gff$.
\end{stepp}
\begin{proof} Suppose there exists $1\neq M\nn G$ \st $G/M\notin\ff$. Set $I/M:=\iso(G/M)\nn G/M$ and let $a_1M,a_2M\notin I/M$. \Wh $a_1M\approx a_2M$ by minimality of $G$. Since, by Lemma \ref{FFF} (a), $g_1M\sim g_2M$ implies $g_1\sim g_2$, we can ``lift'' a path from $a_1M$ to $a_2M$ in $\gf(G/M)$ to a path from $a_1$ to $a_2$ in $\gf(G)$, so $a_1\approx a_2$. So there exists a unique connected component of $\gf(G),$ say $\Omega,$ containing $G\setminus I.$ 
 If $I\in\ff$, then every element of $I\setminus \iso(G)$ must be adjacent to an element of $G\setminus I,$ so $I\setminus \iso(G) \subseteq \Omega$. But this implies $\Omega=G\setminus \iso(G)$, and consequently $\gf(G)$ is connected. Therefore  $I\notin\ff$. Since $\ff$ is 2-recognizable, this implies $\iso(I)<I.$ Let $H$ be a maximal subgroup of $G$ containing $I$. Since
 $\gf(H)$ is connected, there exists a unique connected component of
 $\gf(G),$ say $\Delta,$ containing $H\setminus \iso(H).$ Of course $I\setminus \iso(I) \subseteq H\setminus \iso(H),$ so $I\setminus \iso(I)\subseteq \Delta.$ Recall that $G\setminus I\subseteq \Omega.$ Moreover if $x\in \iso(I) \setminus \iso(G),$ then $x\sim y$ for some $y\in G\setminus I,$ so $\iso(I) \setminus \iso(G)\subseteq \Omega$. If $\Delta\cap \Omega\neq \emptyset,$ then $\Delta=\Omega=G\setminus \iso(G)$ and $\gf(G)$ is connected. So we may assume $\Delta\cap \Omega=\emptyset,$ and consequently
 $(H\setminus \iso(H))\cap (H\setminus I)=\emptyset$, i.e. $H=I\cup \iso(H).$ Since $H\notin \ff$ and $\ff$ is 2-recognizable, $\iso(H)\neq H,$ and consequently $H=I.$ If $g\in G\setminus I,$ then $G=\gen{g}I$, so $G/M=\gen{gM}I/M=\gen{gM}\iso(G/M)$ and, by Lemma \ref{Icyc}, $G/M\in\ff$, a contradiction.
So all the proper factors of $G$ are in $\ff$ and we may   use Lemma \ref{N}. If $\R(G)=1$, then $\iso(G)=1$. Let $a,b\in G,$ both different from $1$. By \cite[Theorem 6.4]{gu}  there is a path in $\gs(G)$ from $a$ to $b$. This path is also a path in $\gf(G)$ since $H\notin\mathfrak{S}$ implies $H\notin\ff$ for every \gr $H$. So if $\R(G)=1$, then $\gf(G)$ is connected. Hence we conclude that
 $G$ is a primitive monolithic soluble group and $N=\soc(G)=\gff$.
\end{proof}

\begin{stepp}
$N\subseteq\iso(G)$.
\end{stepp}
\begin{proof} Since $\iso(G)\nn G$ and $N$ is the unique minimal normal subgroup, if $\iso(G)\neq 1$, then $N\subseteq\iso(G)$. Hence we may assume by contradiction that $\iso(G)=1$. 
Let $S$ be a complement of $N$ in $G.$  Suppose that $S=\gen{s}$ is cyclic.
Since $S$ is a maximal subgroup of $G$, $\langle g,s\rangle =G$ for any $g\notin \gen{s},$ hence there exists a connected component $\Lambda$ of $\gf(G)$ containing $s$ and $G\setminus \gen{s}.$
Moreover, every nontrivial element of $S$, being non-isolated in $\gf(G),$ is adjacent to some element of $G\setminus S$, so $\Lambda=G\setminus \{1\}$ and $\gf(G)$ is connected, a contradiction. So we may assume that $S$ is not cyclic. Take now $n_1, n_2\in N\setminus \{1\}$ and for $i\in\{1,2\}$ let $M_i<S$ \st $n_i\notin\iso(NM_i)$ (this is  possible since $S$ is not cyclic). \Wh $N_i:=N\cap\iso(NM_i)<N$, so $N_1\cup N_2\neq {N}$ and there exists  $n\in N\setminus(N_1\cup N_2)$. We have then $n_1	\approx n$ in $\gf(NM_1)$ and $n_2\approx n$ in $\gf(NM_2)$, therefore $n_1\approx n_2$ in $\gf(G)$. Hence there exists a connected component $\Pi$ of $\gf(G)$ containing $N\setminus\{1\}$. Let now $g=ns$ be an arbitrary element of $G\setminus N.$ First assume $g\notin V(G)$. Since $N\not\leq C_G(g),$ there exists  $n^*\in N\setminus \{n\}$ with the property that $g=(n^*s)^x$ for some $x\in G.$
 We claim that $g\in \Pi.$ Since $n^*n^{-1}\neq 1$, there exists $\bar g= \bar n \bar s$ such that  $\bar g\sim n^*n^{-1}$. Set $H:=N\!\gen{s,\ov{s}}$ (it is a proper \sg of $G$, since for Proposition \ref{lemma10}, $s\notin V(S))$. If $g\notin\iso(H)$, then  $ns\approx n^*n^{-1}$ (since $\gf(H)$ is connected) and then $g\in \Pi.$ Assume   $g\in\iso(H)$.
We have  $n^*s\not\in\iso(H)$, (otherwise, since $\iso(H)$ is a subgroup, $(n^*s)(ns)^{-1}=n^*n^{-1}\in \iso(H)$), but then 
 $n^*s\approx n^*n^{-1}$ in $\gf(H)$ and consequently $n^*s\in \Pi.$   This implies $g=(n^*s)^x\in \Pi^x=\Pi$ (notice that $\Pi^x=\Pi$ since $N\setminus \{1\}
 \in \Pi \cap \Pi^x).$ Suppose now $g\in V(G)$. 
 Choose $n_1, n_2\in N$ and $t \in S$ such that $n_2\sim n_1t$ and 
 set $H:=N\gen{s,t}.$
 If $H=G$, then $t\in V(S)$ and consequently $n_1t\in V(G).$ Since $G$ is soluble, it follows from Lemma \ref{swa} that $g\approx n_1t \approx n_2$ and $g\in \Pi.$
 If  $H<G$, then $ms \notin  \iso(H)$ for some $m\in N$ (otherwise $N\leq \iso(H)).$ By Proposition \ref{lemma10}, $ms\in V(G)$ and, again by Lemma \ref{swa},
 $g \approx ms.$ Moreover, since $\gf(H)$ is connected, $ms\approx n_2$. So $g \approx n_2$ and therefore $g\in \Pi.$
 We reached in this way the conclusion  that $\gf(G)$ is connected, against the assumptions on $G$.
\end{proof}

\begin{stepp}
Statements (2-4) of Theorem \ref{semi} hold.
\end{stepp}
\begin{proof} We can use the same argument of the proof of Theorem \ref{semi}.
\end{proof}

\begin{stepp}
$\G_{\fp}(S)$ is not connected.
\end{stepp}

\begin{proof}
Suppose that $\G_{\fp}(S)$  is connected. Let $s,t\in S$ \st $s\sim t$ in $\G_{\fp}(S)$. We claim that $ns\approx mt$ for every $n,m\in N$. Suppose $\gen{s,t}=S$. By Proposition \ref{lemma10} $ns,mt\in V(G)$  so, by Lemma \ref{swa}, they are in the same connected component of $\gf(G)$. Suppose instead that $\gen{s,t}<S$. We have that $H:=N\gen{s,t}<G$ is not in $\ff$ since $\gen{s,t}\notin\fp$. Therefore $ns$ and $mt$ are not isolated in $H$ and, for minimality, $\gf(H)$ is connected, so $ns\approx mt$ in $\gf(G)$ too. Choose now two non-isolated vertices $n_1s_1,n_2s_2\in\gf(G)$ with $n_1,n_2\in N$ and $s_1,s_2\in S$. Since they are not isolated, $s_1,s_2\notin\is{\fp}(S)$, hence there is a path $s_1=z_0\sim\dots\sim z_l=s_2$ in $\G_{\fp}(S)$ and since $z_i\sim z_{i+1}$, we have, for every $m,h\in N$ and every $i$, that $mz_i\approx hz_{i+1}$ in $\gf(G)$ and so $n_1s_1\approx n_2s_2$, a contradiction.
\end{proof}

\begin{proof}[Proof of Theorem \ref{conreg}]
Suppose $G$ has minimal order with respect to the property that $\gf(G)$ is not connected. By Theorem \ref{connection}, $G$ is a primitive monolithic group and $N\nn\iso(G)=\frf(G)$. By Proposition \ref{frf}, $\frf(G)/N=\frf(G/N)=G/N$, hence $\frf(G)=G$, a contradiction.
\end{proof}

\begin{proof}[Proof of Theorem \ref{conaltri}]
It follows applying Theorem \ref{connection}, noticing that:
\begin{itemize}
\item If $\ff\in\{\uu,\dd\}$, then $\G_{\fp}(S)=\G_{\mathfrak{A}}(S)$  is connected. 
\item If $\ff=\sss_p\n^t$ for some prime $p$ and some $t$, then $\G_{\fp}(S)=\G_{\sss_q\n^{t-1}}(S)$ for some other prime $q$. Therefore we can use induction on $t$, considering that $\sss_p\n$ is regular for every $p$ and that Theorem \ref{conreg} holds.
\item If $\ff=\n^t$ for some $t$, then $\G_{\fp}(S)=\G_{\sss_p\n^{t-1}}(S)$ for some prime $p$ and we can use the point above.\qedhere
\end{itemize} 
\end{proof}
\section{Planarity of $\gf$}
The generating graph $\tilde \Delta(G)$ of a finite group $G$ is the graph whose vertices are the elements of $G$ and in which two vertices $g_1$ and $g_2$ are adjacent if and only if $\langle g_1, g_2\rangle =G.$ Moreover $\Delta(G)$ is the subgraph of $\tilde \Delta(G)$ induced by the subset of its non isolated vertices. Notice that if $G$ is a 2-generated $\ff$-critical group, then $\gf(G)\cong  \Delta(G).$ 

\begin{proof}[Proof of Theorem \ref{planar}]
One implication is easy: if $G\in \ff$ then $\gf(G)$ is a null graph, while if $G\cong S_3$ and $S_3\notin \mathfrak F,$ then $\gf(G)\cong \Delta(G)$ is planar, as it is noticed in \cite{planar}. Conversely, suppose $G\notin \ff$ and $\gf(G)$ is planar. Since $\ff$ is 2-recognizable, there exist $a, b \in G$ such that $\langle a, b \rangle \notin \ff.$ Since $\Delta(\langle a, b\rangle)$ is a subgraph of $\gf(G)$, it must be planar. Finite groups with planar generating graph have been completely classified in \cite{planar}. In particular, if $\Delta(X)$ is planar, then either $X$ is nilpotent or $X\in \{S_3, D_6\}.$ Since $\n \subseteq \ff,$ $\langle a, b \rangle$ is not nilpotent, so either $\langle a, b \rangle \cong S_3$ or $\langle a, b \rangle \cong D_6.$ Since $D_6\cong S_3\times C_2$ and $C_2\in \ff,$ $D_6\notin \ff$ implies $S_3\notin \ff.$ Let $A$ be the set of the non-central involutions of $D_6$ and let $B$ the set of the elements of $D_6$ of order divisible by 3: then $\Gamma_\ff(D_6)$
contains the complete bipartite graph whose partition has the parts $A$ and $B$, so it is not planar.
Hence $\gen{a,b}$ can only be isomorphic to $S_3$. We show that all the elements of $G$ have order less or equal to $3$. Suppose in fact that there is $g\in G$ \st $|g|\geq 4$. Since $\gf(G)$ is planar, $g\notin \iso(G)$  would imply that it generates a copy of $S_3$ with another element, but this is impossible since $|g|\geq 4$. \Wh then that $g\in \iso(G)$ and therefore $|\iso(G)|\geq 4.$ We claim that this is not possible. Indeed $G$ contains $X=\langle a, b \rangle \cong S_3\notin \mathfrak F$. Since $\ff$ is semiregular,  $I:=\iso(G)$ is a  normal subgroup of $G$. Since $I\cap X=1,$ for every $x,y \in I$ we have
$$\frac{\langle ax, by \rangle}{I\cap \langle ax, by \rangle} \cong  \frac{\langle ax, by \rangle I}{I} \cong \frac{\langle a, b \rangle I}{I}\cong {\langle a, b \rangle }\cong S_3\notin \ff,
$$ hence
$\langle ax, by \rangle \notin \mathfrak F.$ But then $\gf(G)$ contains the complete bipartite graph on the two parts $aI$ and $bI$
and then it is not planar. We have so proved that all the elements of $G$ have order order less or equal than $3$. 
Groups with this property have been classified in \cite{groups23}.
Since $G$ is not nilpotent and contains a subgroup isomorphic to 
$S_3$, $G\cong A\rtimes \langle x \rangle$, with $A\cong C_3^t$ and $x$ acting on $A$ sending every element into its
inverse. In particular the subgraph of $\gf(G)$ induced by the $3^t$ involutions is complete, so it is planar only if $t=1$, i.e. $G\cong S_3.$
\end{proof}

\end{document}

\begin{thm}\label{regolari}
	A formation $\ff$ is regular \ifa every finite \gr $G$ which is soluble and strongly critical for $\ff$ has the property that $G/\soc(G)$ is cyclic.
\end{thm}